\newcommand{\rt}{\rightarrow}
\newcommand{\lra}{{\xymatrix@C=3em{\ar @{>} [r] & }}}
\newcommand{\lrt}{\longrightarrow}
\def\db{\operatorname{\mathsf{D^b}}}
\def\ds{\operatorname{\mathsf{D_{sg}}}}
\newcommand{\md}{\mathsf{mod}}
\newcommand{\st}{\stackrel}
\newcommand{\im}{{\mathsf{Im}}}
\newcommand{\MF} {\mathsf{MF}}
\newcommand{\G}{\mathcal{G} }
\newcommand{\umon}{\underline{\mathsf{Mon}}}
\newcommand{\n}{{\mathfrak{n}}}
\newcommand{\cok}{{\rm{Coker}}}
\newcommand{\Ker}{{\rm{Ker}}}
\newcommand{\Hom}{{\mathsf{Hom}}}
\newcommand{\id}{{\mathsf{id}}}
\newcommand{\mon}{{\mathsf{Mon}}}
\newcommand{\mor}{{\mathsf{Mor}}}
\newcommand{\Ext}{\mathsf{{Ext}}}
\newcommand{\C}{\mathcal{C}}
\newcommand{\cp} {\mathcal{P}}
\newcommand{\E}{\mathcal{E}}
\newtheorem{theorem}{Theorem}[section]
\newtheorem{cor}[theorem]{Corollary}
\newtheorem{lemma}[theorem]{Lemma}
\newtheorem{prop}[theorem]{Proposition}
\theoremstyle{definition}
\newtheorem{example}[theorem]{Example}
\newtheorem{remark}[theorem]{Remark}
\newtheorem{s}[theorem]{}
\theoremstyle{plain}
\theoremstyle{definition}
\newtheorem{dfn}[theorem]{Definition}
\numberwithin{equation}{section}
\begin{document}

\title[The monomorphism category of Gorenstein projective modules]
{The monomorphism category of Gorenstein projective modules and comparison with the category of matrix factorizations}

\author[Bahlekeh, Fotouhi, Nateghi and Salarian]{Abdolnaser Bahlekeh, Fahimeh Sadat Fotouhi, Armin Nateghi and Shokrollah Salarian}

%\fbox{\quad {\today}}

%\address{Northeastern University
%Department of Mathematics
%360 Huntington Avenue,
%Boston, MA 02115, USA}
%\email{alexmart@neu.edu}

\address{Department of Mathematics, Gonbad Kavous University, Postal Code:4971799151, Gonbad Kavous, Iran}
\email{bahlekeh@gonbad.ac.ir}
\address{School of Mathematics, Institute for Research in Fundamental Science (IPM), P.O.Box: 19395-5746, Tehran, Iran}
\email{ffotouhi@ipm.ir}

\address{Department of Pure Mathematics, Faculty of Mathematics and Statistics, University of Isfahan, P.O.Box: 81746-73441, Isfahan, Iran }
\email{a.nateghi92@sci.ui.ac.ir}

%\address{Department of Pure Mathematics, Faculty of Mathematics and Statistics, University of Isfahan, P.O.Box: 81746-73441, Isfahan, Iran }\email{a.nateghi@sci.ui.ac.ir}

\address{Department of Pure Mathematics, Faculty of Mathematics and Statistics, University of Isfahan, P.O.Box: 81746-73441, Isfahan,
Iran and \\ School of Mathematics, Institute for Research in Fundamental Science (IPM), P.O.Box: 19395-5746, Tehran, Iran}
\email{Salarian@ipm.ir}

%\date{\today, \setcounter{hours}{\time/60} \setcounter{minutes}{\time-\value{hours}*60} \thehours\,h\ \theminutes\,min}

\subjclass[2020]{13D09, 18G80, 13H10, 13C60}

\keywords{monomorphism category, Gorenstein projective modules, matrix factorizations, Frobenius category, singularity category.}

\thanks{The research of the second author was in part supported by a grant from IPM}

\begin{abstract}
Let $(S, \n)$ be a commutative noetherian local ring and let $\omega\in\n$ be non-zero divisor. This paper is concerned with the category of monomorphisms between finitely generated Gorenstein projective $S$-modules, such that their cokernels are annihilated by $\omega$. We will observe that this category, which will be denoted by $\mon(\omega, \G)$, is an exact category in the sense of Quillen. More generally, it is proved that $\mon(\omega, \G)$  is a Frobenius category. Surprisingly, it is shown that not only the category of matrix factorizations embeds into $\mon(\omega, \G)$, but also its stable category as well as the singularity category of the factor ring $R=S/({\omega)}$, can be realized as triangulated subcategories of the stable category  $\underline{\mon}(\omega, \G)$.
\end{abstract}

\maketitle

\tableofcontents
\section{Introduction}
Assume that $(S, \n)$ is a commutative noetherian local ring, $\omega\in\n$ a non-zerodivisor and $R$ is the factor ring $S/{(\omega)}$.  A finitely generated  $S$-module $M$ is called Gorenstein projective, if it is reflexive, and  $\Ext^i_S(M, S)=0=\Ext^i_S(\Hom_S(M, S), S)$, for any $i\geq 1$. In this paper, we investigate the category of monomorphisms between finitely generated Gorenstein projective $S$-modules such that their cokernels are annihilated by $\omega$, which will be denoted by $\mon(\omega, \G)$.

The concept of finitely generated Gorenstein projective
modules over a commutative noetherian ring, which is a refinement of projective modules,   has been introduced by Auslander and Bridger in the mid-sixties \cite{auslander1969stable} under the name `modules of $G$-dimension zero'  to characterize Gorenstein local rings: a commutative noetherian local ring is Gorenstein if and only if its residue field has finite $G$-dimension. We denote the category of finitely generated projective (resp. Gorenstein projective) $S$-modules by $\cp(S)$ (resp. $\G(S)$). Also, the morphism category of finitely generated $S$-modules will be depicted by $\mor(S)$.  Evidently, $\mon(\omega, \G)$ is a full subcategory of $\mor(S)$. Indeed, $\mon(\omega, \G)$ is the category consisting of all monomorphisms $(G_1\st{f}\rt G_0)$, where $G_1, G_0\in\G(S)$ and $\omega\cok f=0$, and morphisms are commutative squares. The full subcategory of $\mon(\omega, \G)$ consisting of those objects $(P_1\st{f}\rt P_0)$ with $P_1,P_0\in\cp(S)$, will be denoted by $\mon(\omega, \cp)$. It is known that $\mor(S)$ is an abelian category, and so, it will be an exact category in the sense of Quillen  \cite{quillen2006higher}. {In this paper, we consider the category $\mon(\omega, \G)$ with those short exact sequences in $\mor(S)$ such that whose terms are in $\mon(\omega, \G)$. Our first goal is to show that, considering this class of short exact sequences as conflations, $\mon(\omega, \G)$ is an exact category, see Theorem \ref{sexact}.}  It should be noted that, according  to  Example \ref{nonex}, $\mon(\omega, \G)$ is not an extension-closed subcategory of $\mor(S)$, which forces us to verify the axioms of Definition \ref{exact}. More specifically, we show that $\mon(\omega, \G)$ is a Frobenius category, i.e., it is an exact category with enough projective objects and enough injective objects, and these classes of objects coincide. In particular, it is proved that its projective-injective objects are direct summands of finite direct sums of objects of the form $(P\st{\id}\rt P)\oplus(Q\st{\omega}\rt Q)$, for some projective $S$-modules $P, Q$, see Theorem \ref{frob}.

Our motivation to study the monomorphism category of finitely generated Gorenstein projective modules comes from the fact that there is a tie connection between $\mon(\omega, \G)$ and the category of matrix factorizations. On the other hand, the stable category of matrix factorizations as well as the singularity category of $R$, can be realized as triangulated subcategories of $\umon(\omega, \G)$.

The category of matrix factorizations of projective modules, denoted by $\mathsf{MF}(\omega, \cp)$,  is the category whose objects ordered pairs of $S$-homomorphisms ${\xymatrix{(P_1 \ar@<0.6ex>[r]^{\rho_1}& P_0\ar@<0.6ex>[l]^{\rho_0})}}$ in which $P_1, P_0\in\cp(S)$,  and the compositions $\rho_0\rho_1$ and $\rho_1\rho_0$ are the multiplications by $\omega$, moreover morphisms are chain maps. It is known that $\MF(\omega, \cp)$ is a Frobenius category, and in particular, its stable category is triangle equivalent to the singularity category of the factor ring $R=S/{(\omega)}$, provided $S$ is a regular ring, see \cite{buchweitz1987maximal} and  \cite[Theorem 3.9]{orlov2003triangulated}. For historical information on matrix factorizations, we refer the reader to Remark \ref{matrix}.

Inspired by the fact that the finitely generated Gorenstein projective modules are a generalization of projective modules, we define and study the so-called matrix factorization of Gorenstein projective $S$-modules,  by substituting the projective modules with
the Gorenstein projective modules. So, this category, which will be denoted by $\MF(\omega, \G)$,
can be viewed as a generalization of $\MF(\omega, \cp)$.  For a given object $(G_1\st{g_1}\rt G_0)$ in $\mon(\omega, \G)$, there is a unique morphism $(G_0\st{g_0}\rt G_1)$ such that $g_1g_0=\omega.\id_{G_0}$ and $g_0g_1=\omega.\id_{G_1}$, and so, we have the pair ${\xymatrix{(G_1 \ar@<0.6ex>[r]^{g_1}& G_0\ar@<0.6ex>[l]^{g_0})}}$ which is an object of $\MF(\omega, \G)$, see Lemma \ref{lem1}. The second purpose of this paper is to show that the functor sending each object  $(G_1\st{g_1}\rt G_0)\in\mon(\omega, \G)$ to the pair ${\xymatrix{(G_1 \ar@<0.6ex>[r]^{g_1}& G_0\ar@<0.6ex>[l]^{g_0})}}$ is an equivalence of categories, see Theorem \ref{sg}. This, in turn, yields that  $\MF(\omega, \G)$ is a Frobenius category, and particularly, its projective-injective objects are direct summands of finite direct sums of ${\xymatrix{(P \ar@<0.6ex>[r]^{\omega}& P\ar@<0.6ex>[l]^{\id})}}\oplus {\xymatrix{(Q \ar@<0.6ex>[r]^{\id}& Q\ar@<0.6ex>[l]^{\omega})}}$, for some projective $S$-modules $P, Q$, see Proposition \ref{gfrob1}. Since the aforementioned functor sends  projective objects in $\mon(\omega, \G)$ to projectives in $\MF(\omega, \G)$, there is an induced functor $\underline{\MF}(\omega, \G)\lrt\umon(\omega, \G)$, which is proved to be triangle equivalence, see Theorem \ref{22}. Restricting this to the projective modules, gives us a triangle equivalence functor $\umon(\omega, \cp)\lrt\underline{\MF}(\omega, \cp)$, see Corollary \ref{33}.

As the final result of this paper, we construct a fully faithful triangle functor $T:\underline{\G}(R)\lrt\umon(\omega, \G)$ which sends each object $M\in\G(R)$ to $(P\st{f}\rt G)$, arising from a short exact sequence of $S$-modules, $0\rt P\st{f}\rt G\rt M\rt 0$ with $P\in\cp(S)$ and $G\rt M$  a Gorenstein projective precover, see Theorem \ref{main}. This, combined with a fundamental result of Buchweitz and Happel \cite[Theorem 4.4.1]{buchweitz1987maximal}, gives rise to the existence of a fully faithful triangle functor from $\ds(R)$ to the stable category $\umon(\omega, \G)$, {provided that $R$ is Gorenstein},  see Corollary \ref{dd}. This result should be compared with a theorem of  Bergh and Jorgensen which asserts that there is a fully faithful triangle functor from that stable category $\underline{\MF}(\omega, \cp)$ to the singularity category $\ds(R)$, see  \cite[Theorem 3.5]{bergh2015complete}.

%\old{We should point out that, this paper is a modified version of our unpublished work \cite{bahlekehgpair}, in which we build upon and enhance the results established in \cite{bahlekehgpair} by improving and extending them.}

Throughout the paper, $(S, \n)$ is a commutative noetherian local ring, $\omega\in\n$ a non-zerodivisor, and $R$ stands for  the factor ring $S/{(\omega)}$. Also, all modules will be considered as finitely generated modules.

\section{Monomorphism category of Gorenstein projective modules}
 This section focuses on monomorphisms between Gorenstein projective $S$-modules in which their cokernels are annihilated by $\omega$, which will be denoted by $\mon(\omega, \G)$. We show that it is an exact category. Particularly, it is shown that $\mon(\omega, \G)$ is a Frobenius category, and {moreover, its projective-injective objects are completely recognized.}  Let us begin this by recalling the definition of finitely generated Gorenstein projective modules.

\begin{s}{\sc Finitely generated Gorenstein projective modules.} Assume that $M$ is a finitely generated $S$-module. Then, $M$ is said to be Gorenstein projective, if $\Ext^i_S(M, S)=0=\Ext^i_S(\Hom_S(M, S), S)$ for all $i\geq 1$ and the biduality map $\delta_M:M\lrt\Hom_S(\Hom_S(M, S), S)$, defined by $\delta_M(x)(g)=g(x)$ for any $g\in\Hom_S(M, S)$ and $x\in M$, is an isomorphism. The notion of finitely generated Gorenstein projective modules over a commutative noetherian ring has been introduced by Auslander and Bridger in \cite{auslander1969stable} under the name `modules of $G$-dimension zero'. Since then, this concept has found many applications in commutative algebra, algebraic geometry, singularity theory, and relative homological algebra. Enochs and Jenda \cite{enochs1995gorenstein} generalized this notion to any module over any ring, as the syzygy modules of totally acyclic complexes of projective modules.
\end{s}

\begin{dfn}
By the monomorphism category of Gorenstein projective modules, $\mon(\omega, \G)$, we mean a category that whose objects are those $S$-monomorphisms $(G_1\st{f}\rt G_0)$, where $G_1, G_0\in\G(S)$ and $\cok f$ is annihilated by $\omega$, that is to say, $\cok f$ is an $R$-module. Moreover, a morphism $\varphi=(\varphi_1, \varphi_0):(G_1\st{f}\rt G_0)\lrt (G'_1\st{f'}\rt G'_0)$ between two objects is a pair of morphisms $\varphi_1:G_1\rt G'_1$ and $\varphi_0:G_0\rt G'_0$ such that $f'\varphi_1=\varphi_0f$.

It is clear that $\mon(\omega, \G)$ is a full additive subcategory of the monomorphism category of finitely generated Gorenstein projective $S$-modules.
\end{dfn}

There has been a recent surge of interest in monomorphism categories, as they provide a framework for addressing open problems in linear algebra using tools and results from homological
algebra, combinatorics, and geometry, see for example \cite{ringel2008invariant, stable,ringel2014submodule,kussin2013nilpotent,xiong2014auslander,
asadollahi2022monomorphism,hafezi2024stable,hafezi2021subcategories,kosakowska2023abelian} and references therein.

In what follows, we show that $\mon(\omega, \G)$ is an exact category in the sense of Quillen. Recall that an exact category in the sense of Quillen is an additive category endowed with a class of kernel-cokernel pairs, called conflations, subject to certain axioms, see \cite[Definition 2.1]{buhler2010exact} and also \cite[Appendix A]{keller1990chain}. Exact categories generalize abelian categories and serve as a useful categorical framework for studying various subcategories of module categories.  For the reader's convenience, we give the precise definition.
{
\begin{dfn}\label{exact}(\cite[Definition 2.1]{buhler2010exact}) Assume that $\C$ is an additive category and $\E$  a class of kernel-cokernel pairs in $\C$. A kernel-cokernel pair $(i, p)$ in $\E$, which is also called an admissible pair, is a pair of composable morphisms $X'\st{i}\rt X\st{p}\rt X''$, where $i$ is a kernel of $p$ and $p$ is a cokernel of $i$. In this case, $i$ (resp. $p$) is called an admissible monic (resp. admissible epic). Admissible pairs, admissible monics, and admissible epics are also called conflations, inflations and deflations, respectively, see \cite{keller1990chain} and \cite{quillen2006higher}.\\ The pair $(\C, \E)$ is said to be an exact category (sometimes the class $\E$ is suppressed), if the following axioms hold:
\begin{itemize} \item[$(E0)$] For each object $C\in\C$, the identity morphism $\id_C$ is an admissible monic. \item [$(E0^{op})$] For each object $C\in\C$, the identity morphism $\id_C$ is an admissible epic.\item[$(E1)$] The class of admissible monics is closed under composition.\item[$(E1^{op})$] The class of admissible epics is closed under composition.\item[$(E2)$] The push-out of an admissible monic along an arbitrary morphism exists and yields an admissible monic.\item[$(E2^{op})$] The pull-back of an admissible epic along an arbitrary morphism exists and yields an admissible epic.
\end{itemize}

Abelian categories are naturally exact categories whose conflations are the class of all short exact sequences. Moreover, an extension-closed subcategory of an abelian category is an exact category in the same manner. Indeed, typical examples of exact categories arise in this way, see \cite[Lemma 10.20]{buhler2010exact}.
\end{dfn}

Since $S$ is a noetherian ring, the monomorphism category of $S$-modules, $\mathsf{Mon}(S)$, is an abelian category, and so, it will be an exact category. As $\mon(\omega, \G)$ is a full subcategory of $\mathsf{Mon}(S)$,
we will observe that this exact structure is inherited by $\mon(\omega, \G)$. {Namely, we show that, considering those short exact sequences in $\mathsf{Mon}(S)$ with terms in $\mon(\omega, \G)$ as conflations, $\mon(\omega, \G)$ is an exact category.}  According to the following example,  $\mon(\omega, \G)$ is not an extension-closed subcategory of $\mathsf{Mon}(S)$. Thus the canonical exact structure of $\mathsf{Mon}(S)$ does not transfer automatically to $\mon(\omega, \G)$. So, we need to check all the axioms of exact categories directly. This will be done through a series of results.}

\begin{example}\label{nonex}(\cite[Example 2.3]{stablebahlekeh})
Let $(S, \n)$ be a commutative Gorenstein local ring with $\mathsf{dim}S=1$ and let $\omega\in\n$ be non-zero divisor. Assume that $M$ is an $S$-module that is not annihilated by $\omega$ and $\omega^2M=0$, e.g. $M=S/{(\omega^2)}$. Set $N:=0:_{M}\omega$, the submodule consisting of those objects in $M$ which are annihilated by $\omega$, and $K:=M/N$. One should note that $K$ is annihilated by $\omega$, as well. Since $S$ is a one-dimensional Gorenstein ring, there exist short exact sequences of $S$-modules $0\rt G\st{f}\rt Q\st{\pi}\rt N\rt 0$ and $0\rt G'\st{f'}\rt Q'\st{\pi'}\rt K\rt 0$, where $G, G'\in\G(S)$ and $Q, Q'\in\cp(S)$, see for example \cite[Theorem 10.2.14]{enochs2011relative}. Thus $(G\st{f}\rt Q)$ and $(G'\st{f'}\rt Q')$ are objects of $\mon(\omega, \G)$. Now one may
apply the horseshoe lemma and get the short exact sequence $0\lrt (G\st{f}\rt Q)\lrt (G''\st{g}\rt Q\oplus Q')\lrt (G'\st{f'}\rt Q')\lrt 0$ in $\mathsf{Mor}(S)$. As $\cok g=M$ is not annihilated by $\omega$, the middle term does not belong to $\mon(\omega, \G)$. Consequently, $\mon(\omega, \G)$ is not an extension-closed subcategory of $\mathsf{Mor}(S)$ (and also, $\mon(S)$).
\end{example}

\begin{lemma}\label{e1}Let $(G_1\st{g_1}\rt G_0)\st{\varphi=(\varphi_1,\varphi_0)}\lrt(K_1\st{f_1}\rt K_0)$ and $(K_1\st{f_1}\rt K_0)\st{\theta=(\theta_1,\theta_0)}\lrt(T_1\st{h_1}\rt T_0)$ be two  {admissible monics} in $\mon(\omega, \G)$. Then $\theta\varphi$ is also an admissible monic.
\end{lemma}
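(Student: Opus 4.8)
The plan is to prove that the composition of two admissible monics is an admissible monic by unwinding what "admissible monic in $\mon(\omega, \G)$" means and then reducing to a statement about short exact sequences. By the exact structure we are building, $\varphi$ being an admissible monic means there is a conflation $(G_1\st{g_1}\rt G_0)\st{\varphi}\lrt(K_1\st{f_1}\rt K_0)\lrt (C_1\st{c_1}\rt C_0)\rt 0$ in $\mathsf{Mon}(S)$ whose three terms all lie in $\mon(\omega, \G)$; similarly for $\theta$ with cokernel object $(D_1\st{d_1}\rt D_0)$. So first I would record, for each of the two given monics, the associated short exact sequences in the component rows: $0\rt G_i\rt K_i\rt C_i\rt 0$ and $0\rt K_i\rt T_i\rt D_i\rt 0$ for $i=0,1$, with all of $C_i, D_i$ annihilated by $\omega$ and all the middle and outer module terms Gorenstein projective.

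The goal is to produce a cokernel object for $\theta\varphi$ and check it lands in $\mon(\omega, \G)$. First I would observe that $\theta\varphi$ is componentwise a monomorphism (a composite of monos), so the cokernel objects $(E_i)$ sitting in $0\rt G_i\rt T_i\rt E_i\rt 0$ exist at the level of rows. The key structural input is the snake lemma (or the standard $3\times 3$/octahedral comparison): for each row $i$, composing the two short exact sequences gives an exact sequence $0\rt C_i\rt E_i\rt D_i\rt 0$, exhibiting $E_i$ as an extension of $D_i$ by $C_i$. Since both $C_i$ and $D_i$ are $R$-modules (annihilated by $\omega$), I need $E_i$ to be annihilated by $\omega$ as well — but here is the subtlety: an extension of two $\omega$-torsion modules is $\omega^2$-torsion, not automatically $\omega$-torsion. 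This is precisely the phenomenon behind Example \ref{nonex}, and so the annihilation claim cannot come for free from the outer terms alone.

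The main obstacle, therefore, is showing $\omega\cok(\theta\varphi)=0$, i.e. that the cokernel object of the composite genuinely lies in $\mon(\omega, \G)$ rather than merely having $\omega^2$ acting as zero. The way I expect to resolve this is to use that the cokernel map $\theta:(K_1\rt K_0)\rt(T_1\rt T_0)$ is itself a morphism \emph{in} $\mon(\omega, \G)$, so its cokernel object $(D_1\st{d_1}\rt D_0)$ is an object of $\mon(\omega, \G)$ with $d_1$ a monomorphism; combined with the analogous statement for $\varphi$, I would chase the commutative diagram relating the rows $i=1$ and $i=0$. Concretely, I would set up the $2\times 2\times 2$ diagram with rows indexed by $i$ and the three short exact sequences stacked, and use that $\cok(\theta\varphi)=(E_1\st{e_1}\rt E_0)$ must again be a monomorphism object; then compute the action of $\omega$ on $\cok e_1$ directly. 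The cleanest route is likely to verify the cokernel object $(E_1\st{e_1}\rt E_0)$ fits into the conflation for $\theta\varphi$ and to show $\cok(e_1)$ is the cokernel of the composite map of rows, which I would identify with an $R$-module by exploiting that both $e_1$ and $e_0$ factor through the $K$-terms, forcing $\omega\cdot\cok(e_1)=0$ via a diagram chase rather than via the naive extension estimate.

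Once the cokernel object is shown to lie in $\mon(\omega, \G)$, the remaining points are routine: $\theta\varphi$ is a kernel of its cokernel (this is inherited from the abelian ambient category $\mathsf{Mon}(S)$), and the resulting sequence $(G_1\st{g_1}\rt G_0)\st{\theta\varphi}\lrt(T_1\st{h_1}\rt T_0)\lrt(E_1\st{e_1}\rt E_0)$ is a kernel-cokernel pair all of whose terms lie in $\mon(\omega, \G)$, hence a conflation. Thus $\theta\varphi$ is an admissible monic, which is axiom $(E1)$. I would emphasize in the write-up that the content is entirely in the $\omega$-annihilation of the cokernel; everything else transfers from the ambient abelian structure of $\mathsf{Mon}(S)$.
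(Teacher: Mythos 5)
Your proposal has the right skeleton (everything reduces to showing that the cokernel object of $\theta\varphi$ lies in $\mon(\omega, \G)$, the kernel--cokernel exactness being inherited from the ambient abelian category), but it misstates what membership in $\mon(\omega, \G)$ requires, and this derails the core of the argument. An object $(E_1\st{e_1}\rt E_0)$ of $\mon(\omega, \G)$ has $E_1, E_0$ Gorenstein projective, $e_1$ a monomorphism, and only $\cok e_1$ annihilated by $\omega$; the component modules themselves are never $\omega$-torsion unless they vanish, since a Gorenstein projective module embeds in a projective module and $\omega$ is a non-zerodivisor on $S$. So your claims that ``all of $C_i, D_i$ are annihilated by $\omega$'' and that the goal is ``$E_i$ annihilated by $\omega$'' (rather than merely by $\omega^2$) are the wrong conditions: $C_i$ and $D_i$ are Gorenstein projective, and the componentwise sequences $0\rt C_i\rt E_i\rt D_i\rt 0$ should instead be used to conclude $E_i\in\G(S)$ by extension-closure of $\G(S)$ --- a verification your write-up never performs. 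The $\omega$-versus-$\omega^2$ ``extension subtlety'' you flag, which is indeed the phenomenon behind Example \ref{nonex}, concerns the cokernels of the \emph{structure maps} of the objects, not their component modules, and it is not the obstacle in proving $(E1)$ at all.

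The step that actually carries the lemma --- and which your proposal gestures at but never supplies --- is this. First, $e_1$ is a monomorphism by the snake lemma applied to the rows $0\rt C_1\rt E_1\rt D_1\rt 0$ and $0\rt C_0\rt E_0\rt D_0\rt 0$ with vertical maps $c_1, e_1, d_1$, using that $c_1$ and $d_1$ are monomorphisms (i.e.\ that the cokernel objects of $\varphi$ and $\theta$ are genuinely objects of $\mon(\omega, \G)$); you assert this ``must again'' hold without argument. Second, and decisively, apply the snake lemma to $0\rt G_1\st{\theta_1\varphi_1}\rt T_1\rt E_1\rt 0$ over $0\rt G_0\st{\theta_0\varphi_0}\rt T_0\rt E_0\rt 0$ with vertical maps $g_1, h_1, e_1$: since $e_1$ is a monomorphism, this yields a short exact sequence $0\rt\cok g_1\rt\cok h_1\rt\cok e_1\rt 0$, so $\cok e_1$ is a quotient of $\cok h_1$, which is annihilated by $\omega$ because $(T_1\st{h_1}\rt T_0)\in\mon(\omega, \G)$. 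No extension estimate and no diagram chase through the $K$-terms is needed: the $\omega$-annihilation is imported from the \emph{target} object $(T_1\st{h_1}\rt T_0)$, not from the two cokernel objects, and your proposed factorization ``of $e_1$ and $e_0$ through the $K$-terms'' is not a meaningful statement about these maps. As written, the proof is incomplete precisely at its decisive point.
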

\begin{proof}By our assumption, $\cok\varphi=(L_1\st{l_1}\rt L_0)$ and $\cok\theta=(E_1\st{e_1}\rt E_0)$ lie in $\mon(\omega, \G)$. We shall prove that $\cok\theta\varphi=(Z_1\st{z_1}\rt Z_0)\in\mon(\omega, \G)$, as well. {Consider the following commutative diagram of $S$-modules with exact rows and columns:

\[\xymatrix@C-0.5pc@R-.8pc{&&0\ar[d]&0 \ar[d]  && \\ 0\ar[r]&
G_i~\ar[r]^{\varphi_i}\ar@{=}[d]& K_i\ar[r]\ar[d]_{\theta_i}& L_i\ar[d]\ar[r] & 0 \\ 0 \ar[r]  & G_i~\ar[r]^{\theta_i\varphi_i} & T_i\ar[r]\ar[d]& Z_i\ar[d] \ar[r]& 0&\\
&& E_i\ar@{=}[r] \ar[d] & E_i
\ar[d]  &&\\ &&0& 0&&&\\ }\]where $i\in\{0, 1\}$}. In particular, one may obtain the commutative diagram with exact rows \[\xymatrix{0\ar[r]& L_1 \ar[r] \ar[d]_{l_1} & Z_1 \ar[r] \ar[d]_{z_1} & E_1 \ar[d]_{e_1}
\ar[r]&0\\0\ar[r]& L_0 \ar[r] & Z_0 \ar[r] & E_0\ar[r]&0. }\]As $l_1$ and $e_1$ are monomorphisms, the snake lemma yields that the same is true for $z_1$. Since $\G(S)$ is closed under extensions and $L_i, E_i\in\G(S),$ one infers that $Z_i\in\G(S)$. So, the proof will be completed, if we show that $\cok z_1\in\md R$.
To do this, take the following commutative diagram of $S$-modules with exact rows: \[\xymatrix{0\ar[r]& G_1 \ar[r]^{\theta_1\varphi_1} \ar[d]_{g_1} & T_1 \ar[r] \ar[d]_{h_1} & Z_1 \ar[d]_{z_1}\ar[r]&0
\\ 0\ar[r]&G_0 \ar[r]^{\theta_0\varphi_0} & T_0 \ar[r] & Z_0\ar[r]&0.}\] Since $z_1$ is a monomorphism, making use of the snake lemma, gives us the short exact sequence of $S$-modules
$0\rt\cok g_1\rt\cok h_1\rt\cok z_1\rt 0$. Now since $\cok h_1\in\md R$, one may deduce that the same is true for $\cok z_1$. So the proof is finished.
\end{proof}

%\old{\begin{remark}\label{epi}It is easy to see that $\mon(\omega, \G)$ is closed under kernels of epimorphisms. In this direction, take an epimorphism $(G_1\st{g_1}\rt G_0)\st{\varphi=(\varphi_1,\varphi_0)}\lrt(K_1\st{f_1}\rt K_0)$ in $\mon(\omega, \G)$. Set $\Ker \varphi:=(T_1\st{h_1}\rt T_0)$. Since $\G(S)$ is closed under kernels of epimorphisms, we have that $T_1, T_0\in\G(S)$. Moreover, as $\cok g_1$ is an $R$-module, the snake lemma yields that $\cok h_1$ is also an $R$-module. Thus, $(T_1\st{h_1}\rt T_0)\in\mon(\omega, \G)$, as claimed.
%\end{remark}}

The next proposition ensures the validity of axiom $(E2)$. In this direction, the following easy lemma plays a crucial role.
\begin{lemma}\label{ll}{Consider the following commutative diagram of $S$-modules with exact rows and columns:
\[\xymatrix@C-0.5pc@R-.8pc{&0\ar[d]&0 \ar[d] & && \\ 0\ar[r]&
M \ar[r]^{g} \ar[d]_{f} & X\ar[r]^{h}\ar[d]_{\psi} & L\ar[r]\ar@{=}[d] & \\ 0 \ar[r]  & N \ar[r]^{g'} \ar[d]_{\theta} &
Y \ar[d]_{\varphi} \ar[r]^{h'} & L  \ar[r]& 0&\\
& T\ar@{=}[r] \ar[d] & T
\ar[d] & &&\\ &0& 0& &&\\ }\]}If the upper row and left column are in $\md R$, then the same is true for the middle row and column.
\end{lemma}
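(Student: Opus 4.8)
The plan is to reduce the whole assertion to the single statement that the central module $Y$ lies in $\md R$, and then to establish this by a short diagram chase that uses the middle row and the middle column \emph{together}.

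First I would note that every module occurring in the middle row ($N$, $Y$, $L$) or in the middle column ($X$, $Y$, $T$) already appears as a term of the upper row or of the left column, the only exception being $Y$. By hypothesis the terms $M, X, L$ of the upper row and the terms $M, N, T$ of the left column are all annihilated by $\omega$; hence the lemma is equivalent to the single claim that $\omega Y=0$, i.e. $Y\in\md R$.

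To prove $Y\in\md R$, the key step is to show that $Y$ is generated as an $S$-module by $\im\psi$ and $\im g'$. Fix $y\in Y$. Commutativity of the lower-left square gives $\varphi g'=\theta$, and exactness of the left column makes $\theta\colon N\rt T$ surjective; thus there is $n\in N$ with $\theta(n)=\varphi(y)$, so that $\varphi(y-g'(n))=0$. Exactness of the middle column at $Y$ then yields $y-g'(n)\in\ker\varphi=\im\psi$, so $y-g'(n)=\psi(x)$ for some $x\in X$, i.e. $y=\psi(x)+g'(n)$. Since $X,N\in\md R$ we have $\omega x=0$ and $\omega n=0$, whence $\omega y=\psi(\omega x)+g'(\omega n)=0$. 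As $y$ was arbitrary, $Y\in\md R$, and the conclusion for the middle row and column follows.

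The point requiring care --- and the reason the lemma is not entirely formal --- is that one must \emph{not} attempt to deduce $Y\in\md R$ from the middle row $0\rt N\rt Y\rt L\rt 0$ alone: the class $\md R$ is not closed under extensions inside $\md S$, which is precisely the phenomenon underlying Example \ref{nonex}. The argument genuinely needs the pushout-type structure of the full diagram, encoded in the decomposition $y=\psi(x)+g'(n)$ furnished by the extra column; this decomposition step is the only nontrivial input and is where I expect the actual content of the proof to reside.
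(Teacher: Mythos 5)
Your proposal is correct and follows essentially the same argument as the paper: both reduce the lemma to showing $\omega Y=0$, then chase an arbitrary $y\in Y$ by lifting $\varphi(y)$ through the surjection $\theta$ to get $n\in N$ with $\varphi(y-g'(n))=0$, use exactness of the middle column to write $y-g'(n)=\psi(x)$, and conclude $\omega y=0$ from $\omega n=0=\omega x$. Your closing remark about why the middle row alone cannot suffice (non-closure of $\md R$ under extensions, as in Example \ref{nonex}) is a sound observation, though not part of the paper's proof.
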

\begin{proof}We only need to show that $Y$ is an $R$-module. To see this, take an arbitrary object $y\in Y$. So there is an object $n\in N$ such that $\theta(n)=\varphi(y)$. The commutativity of the bottom-most square yields that $g'(n)-y\in\im(\psi)$. Take an object $x\in X$ such that $g'(n)-y=\psi(x)$. By our hypothesis, $\omega n=0=\omega x$, implying that $\omega y=0$. This means that $Y$ is annihilated by $\omega$, and then, it will be an $R$-module, as desired.
\end{proof}

\begin{prop}\label{pushout}The push-out of any diagram {$$\begin{CD}(G_1\st{f}\rt G_0) @>{\varphi}>>(K_1\st{g}\rt K_0)\\@V\theta VV \\
(T_1\st{h}\rt T_0)\end{CD}$$} in $\mon(\omega, \G)$, where $\varphi=(\varphi_1,\varphi_0)$ is  {an admissible monic exists.}
\end{prop}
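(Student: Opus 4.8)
The plan is to build the push-out inside the ambient abelian category $\mathsf{Mon}(S)$ (equivalently $\mor(S)$), where all push-outs exist and are computed componentwise, and then to verify that the resulting object actually lies in $\mon(\omega,\G)$. Since $\mon(\omega,\G)$ is a \emph{full} subcategory, once the candidate object is shown to belong to it, the universal property it already enjoys in $\mathsf{Mon}(S)$ restricts verbatim, so it is automatically the push-out in $\mon(\omega,\G)$. Concretely, write the given data as $\varphi=(\varphi_1,\varphi_0)\colon(G_1\st{f}\rt G_0)\rt(K_1\st{g}\rt K_0)$ and $\theta=(\theta_1,\theta_0)\colon(G_1\st{f}\rt G_0)\rt(T_1\st{h}\rt T_0)$, form the componentwise push-outs $P_i$ of $K_i\lf G_i\rt T_i$, and let $(P_1\st{p}\rt P_0)$ be the induced object together with the canonical morphisms $\psi\colon(K)\rt(P)$ and $\eta\colon(T)\rt(P)$. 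Because $\varphi$ is an admissible monic, its cokernel $(L_1\st{\ell}\rt L_0):=\cok\varphi$ lies in $\mon(\omega,\G)$, and pushing the conflation $0\rt(G)\st{\varphi}\rt(K)\rt(L)\rt0$ out along $\theta$ produces a short exact sequence $0\rt(T)\st{\eta}\rt(P)\rt(L)\rt0$ in $\mathsf{Mon}(S)$ with $\cok\eta\cong\cok\varphi=(L)$.

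It then remains to check the three defining conditions for $(P_1\st{p}\rt P_0)\in\mon(\omega,\G)$. First, reading the last conflation componentwise gives short exact sequences $0\rt T_i\rt P_i\rt L_i\rt0$ of $S$-modules; since $T_i,L_i\in\G(S)$ and $\G(S)$ is closed under extensions, I get $P_i\in\G(S)$ for $i=0,1$. Second, to see that $p$ is a monomorphism I would apply the snake lemma to the morphism of short exact sequences with rows $0\rt T_i\rt P_i\rt L_i\rt0$ and vertical maps the internal maps $h$, $p$, $\ell$: as $h$ and $\ell$ are monic (being internal maps of objects of $\mon(\omega,\G)$), the kernel of $p$ vanishes. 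This is exactly the pattern already used in Lemma \ref{e1}.

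The only genuinely delicate point is the third condition, $\cok p\in\md R$, and this is where Lemma \ref{ll} is indispensable. The snake lemma applied to the same diagram only yields a short exact sequence $0\rt\cok h\rt\cok p\rt\cok\ell\rt0$ exhibiting $\cok p$ as an \emph{extension} of the $R$-modules $\cok\ell$ and $\cok h$; by Example \ref{nonex} such an extension need not be annihilated by $\omega$, so this is not enough. The way around it is to exploit that $\cok p$ is not merely an extension but an \emph{amalgam}: applying the right exact cokernel-of-internal-maps functor to the push-out presentation $(G)\rt(K)\oplus(T)\rt(P)\rt0$ realizes $\cok p$ as the push-out of $\cok g\lf\cok f\rt\cok h$, hence as a quotient of $\cok g\oplus\cok h$, where $\cok g,\cok h\in\md R$ because $(K),(T)\in\mon(\omega,\G)$. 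Organizing $\cok f,\cok g,\cok h,\cok p$ into the commutative diagram of Lemma \ref{ll} (with $\cok p$ in the role of $Y$, and $\cok g,\cok h\in\md R$ supplying the hypotheses), Lemma \ref{ll} forces $\cok p$ to be annihilated by $\omega$. With all three conditions in hand, $(P)\in\mon(\omega,\G)$, and since $\cok\eta\cong(L)\in\mon(\omega,\G)$ the cobase-changed morphism $\eta$ is an admissible monic, which is precisely what axiom $(E2)$ demands. I expect this third step to be the main obstacle: the entire subtlety of the exact structure lives in the fact that $\omega$-torsion is preserved by amalgams but not by arbitrary extensions.
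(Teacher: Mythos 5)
Your proof is correct, and its skeleton coincides with the paper's: form the push-out componentwise in the ambient morphism category, identify the cokernel of the cobase-changed map with $\cok\varphi$, use extension-closure of $\G(S)$ to see that the components are Gorenstein projective, and isolate $\omega\cok p=0$ as the genuinely delicate point. The two proofs differ exactly at that point, and your primary argument is cleaner than the paper's. The paper applies the cokernel functor to the entire $3\times 3$ push-out diagram and then runs the diagram chase of Lemma \ref{ll}; you instead observe that the cokernel functor $\mor(S)\lrt\md S$, $(A\st{u}\rt B)\mapsto \cok u$, is right exact (indeed a left adjoint, so it preserves push-outs), whence applying it to the presentation $(G)\rt (K)\oplus (T)\rt (P)\rt 0$ exhibits $\cok p$ as a quotient of $\cok g\oplus\cok h$; a quotient of a module annihilated by $\omega$ is annihilated by $\omega$, and the step is finished. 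Consequently your closing appeal to Lemma \ref{ll} is redundant --- contrary to your assertion that the lemma is ``indispensable,'' your amalgam observation is exactly what makes it dispensable; Lemma \ref{ll} is the paper's way of extracting the same push-out information when only the $3\times 3$ grid of cokernels is retained. Two minor points also favor your version: you verify that $p$ is a monomorphism via the snake-lemma argument of Lemma \ref{e1}, whereas the paper gets this for free only by asserting that $\mathsf{Mon}(S)$ is abelian; and you state explicitly both that a push-out computed in the ambient category whose vertex lands in the full subcategory $\mon(\omega, \G)$ is automatically a push-out there, and that the resulting inflation is admissible because its cokernel is $\cok\varphi\in\mon(\omega, \G)$ --- which is the actual content of axiom $(E2)$ and is left implicit in the paper.
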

\begin{proof} Set $\cok\varphi:=(L_1\st{l}\rt L_0)$ and $\cok\theta:=(T'_1\st{h''}\rt T'_0)$. Since $\mon(S)$ is an exact category, because it is an abelian category, there is a push-out diagram
{
\[\xymatrix@C-0.5pc@R-.8pc{&0\ar[d]&0 \ar[d] & && \\ 0\ar[r]&
(G_1\st{f}\rt G_0) \ar[r]^{\varphi} \ar[d]_{\theta} & (K_1\st{g}\rt K_0)\ar[r]\ar[d]_{\psi} &  (L_1\st{l}\rt L_0)\ar[r]\ar@{=}[d] & \\ 0 \ar[r]  & (T_1\st{h}\rt T_0) \ar[r]^{\varphi'} \ar[d] &
(E_1\st{h'}\rt E_0) \ar[d] \ar[r] & (L_1\st{l}\rt L_0)  \ar[r]& 0&\\
& (T'_1\st{h''}\rt T'_0)\ar@{=}[r] \ar[d] & (T'_1\st{h''}\rt T'_0)
\ar[d] & &&\\ &0& 0& &&\\ }\]in $\mon(S)$, where rows and columns are exact. Since, by our assumption, $(L_1\st{l}\rt L_0)\in\mon(\omega, \G)$ and} $\G(S)$ is closed under extensions, one deduces that  $E_1, E_0\in\G(S)$. So, in order to complete the proof, we need to show that $\cok h'$ is an $R$-module. To see this, one may apply the functor cokernel to the above diagram and get the following {commutative diagram of $S$-modules with exact rows and columns:
\[\xymatrix@C-0.5pc@R-.8pc{&0\ar[d]&0 \ar[d] & && \\ 0\ar[r]&
\cok f~\ar[r]\ar[d]& \cok g\ar[r]\ar[d]& \cok l\ar@{=}[d]\ar[r] & 0\\ 0 \ar[r]  & \cok h~\ar[r]\ar[d] & \cok h'\ar[r]\ar[d]& \cok l  \ar[r]& 0&\\
&\cok h''~\ar@{=}[r]\ar[d]  & \cok h''
\ar[d] & &&\\ &0& 0& &&\\ }\]}By our hypothesis, the upper row and left column are in $\md R$. Thus, making use of Lemma \ref{ll} ensures that $\cok h'$ is an $R$-module. So the proof is finished.
\end{proof}

%\old{The proof of the following result is dual to the proof of Proposition \ref{pushout} and we omit it.\begin{prop}\label{e2op}The pull-back of any diagram {$$\begin{CD}&& (T_1\st{h}\rt T_0)\\&& @V\theta VV\\ (G_1\st{f}\rt G_0) @>{\varphi}>>(K_1\st{g}\rt K_0),\end{CD}$$} in $\mon(\omega, \G)$ with $\varphi$  {an admissible epic}, exists.
%\end{prop}}
In light of these prerequisite, we can prove the result below, which says that the monomorphism category of Gorenstein projective $S$-modules is an exact category, where the conflations are those short exact sequences in $\mon(S)$ with terms in $\mon(\omega, \G)$.
\begin{theorem}\label{sexact} $\mon(\omega, \G)$ is an exact category.
\end{theorem}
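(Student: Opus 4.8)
The plan is to verify the axioms $(E0)$, $(E0^{op})$, $(E1)$, $(E1^{op})$, $(E2)$, and $(E2^{op})$ of Definition \ref{exact} directly, since Example \ref{nonex} shows $\mon(\omega, \G)$ is not extension-closed in $\mon(S)$ and hence we cannot simply invoke \cite[Lemma 10.20]{buhler2010exact}. The key observation underlying the whole argument is that the declared conflations are precisely the short exact sequences of $\mon(S)$ all three of whose terms happen to lie in $\mon(\omega, \G)$; since $\mon(S)$ is abelian, kernels, cokernels, push-outs, and pull-backs all exist there, and the only work is to check that the relevant third object again lands in $\mon(\omega, \G)$, i.e.\ that it is a monomorphism of Gorenstein projectives with $\omega$-torsion cokernel. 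The two components of this check recur throughout: closure of $\G(S)$ under extensions (a standard fact) handles the Gorenstein-projective requirement, while a diagram chase of the associated sequence of cokernels handles the $\omega$-annihilation requirement.

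The axioms split naturally. Axioms $(E0)$ and $(E0^{op})$ are immediate: for any $X=(G_1\st{f}\rt G_0)$ the identity $\id_X$ is both a kernel and a cokernel in the evident split conflation $X\st{\id}\rt X\rt 0$, and the zero object trivially lies in $\mon(\omega, \G)$. Axioms $(E1)$ and $(E2)$ are exactly the content of the two results just proved: Lemma \ref{e1} establishes that admissible monics are closed under composition, and Proposition \ref{pushout}, together with Lemma \ref{ll}, establishes that the push-out of an admissible monic along an arbitrary morphism exists and is again an admissible monic. So the forward proof consists of citing these and then treating the dual axioms $(E1^{op})$ and $(E2^{op})$ for admissible epics.

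For the dual axioms I would carry out the exact order-reversed versions of the two arguments already given. For $(E1^{op})$, given composable admissible epics, form the diagram of kernels and apply the snake lemma to produce the kernel of the composite, then run the same two-part check: the kernel terms lie in $\G(S)$ because $\G(S)$ is closed under kernels of epimorphisms between Gorenstein projectives (equivalently under syzygies), and the cokernel-annihilation condition follows from a short exact sequence of cokernels exactly as in Lemma \ref{e1}. For $(E2^{op})$, I would dualize Proposition \ref{pushout}: the pull-back exists in $\mon(S)$ since it is abelian, the middle object of the pull-back square has Gorenstein-projective entries by extension-closure, and the dual of Lemma \ref{ll} (a diagram chase showing the middle row and column of the induced cokernel diagram remain $R$-modules) confirms the $\omega$-torsion condition.

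The main obstacle is the dual of Lemma \ref{ll}, and more generally ensuring the $\omega$-annihilation condition survives pull-backs, because this is exactly the property that fails for arbitrary extensions in Example \ref{nonex}. The delicate point is that closure under $\omega$-torsion cokernels is not automatic from the abelian structure of $\mon(S)$; it must be extracted from the specific shape of the conflation via the snake lemma, using that two of the three cokernels are already killed by $\omega$. I expect the Gorenstein-projectivity bookkeeping (invoking extension-closure and closure under syzygies of $\G(S)$) to be routine, so the real care goes into the cokernel diagram chases for the $(E1^{op})$ and $(E2^{op})$ cases, which mirror Lemma \ref{e1} and Lemma \ref{ll} but must be written out in the epic direction.
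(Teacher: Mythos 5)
Your proposal is correct and follows the paper's proof essentially verbatim: both verify the axioms of Definition \ref{exact} directly, citing Lemma \ref{e1} for $(E1)$ and Proposition \ref{pushout} (with Lemma \ref{ll}) for $(E2)$, handling $(E1^{op})$ via closure of $\mon(\omega, \G)$ under kernels of epimorphisms (the snake lemma on cokernels together with closure of $\G(S)$ under kernels of epimorphisms), and obtaining $(E2^{op})$ by dualizing the push-out argument. Your identification of the delicate point --- that the $\omega$-annihilation condition must be extracted from the specific shape of the conflation rather than from extension-closure --- matches the paper's reasoning exactly.
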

\begin{proof}It is clear that the class of short exact sequences in $\mathsf{Mon}(S)$ with terms in $\mon(\omega, \G)$ is closed under isomorphisms. Moreover, axioms $(E0)$ and $(E0^{op})$ are obviously satisfied. The  axiom $(E1)$ is true, because of Lemma \ref{e1}. {Also, one may use the fact that $\G(S)$ is closed under kernels of epimorphisms, and apply the snake lemma, to seee that $\mon(\omega, \G)$ is colsed under kernels of epimorphisms. This, in particular, guarantees the validity of axiom $(E1^{op})$, as well.} Furthermore, axiom $(E2)$ is satisfied, thanks to Proposition \ref{pushout}. Finally, {the validity of axiom $(E2^{op})$ is obtained by dualizing the argument used in the proof of    Proposition \ref{pushout}}. So the proof is finished.
\end{proof}

It should be noted that, since $\cp(S)$ is closed under extensions, $\mon(\omega, \cp)$ will be an extension-closed subcategory of $\mon(\omega, \G)$. This fact, combined with \cite[Lemma 10.20]{buhler2010exact}, immediately leads to the result below.

\begin{cor}$\mon(\omega, \cp)$ is an extension-closed exact subcategory of $\mon(\omega, \G)$.
\end{cor}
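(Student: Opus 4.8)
The plan is to invoke \cite[Lemma 10.20]{buhler2010exact}, which asserts that a full additive subcategory of an exact category that is closed under extensions inherits a canonical exact structure whose conflations are exactly those conflations of the ambient category lying entirely inside the subcategory. Since Theorem \ref{sexact} already establishes that $\mon(\omega, \G)$ is an exact category, it suffices to verify two things: that $\mon(\omega, \cp)$ is a full additive subcategory of $\mon(\omega, \G)$, and that it is closed under extensions therein. Fullness and additivity are immediate from the definition, as $\mon(\omega, \cp)$ is by construction the full subcategory on those objects $(P_1\st{f}\rt P_0)$ with $P_1, P_0\in\cp(S)$, and this class is closed under finite direct sums and contains the zero object.

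The heart of the matter is the extension-closure. Suppose
$$0\lrt(P_1\st{f}\rt P_0)\lrt(G_1\st{g}\rt G_0)\lrt(P'_1\st{f'}\rt P'_0)\lrt 0$$
is a conflation in $\mon(\omega, \G)$ whose two outer terms belong to $\mon(\omega, \cp)$; I must show the middle term does as well. By the very definition of the exact structure on $\mon(\omega, \G)$ given in Theorem \ref{sexact}, this conflation is a short exact sequence in $\mon(S)$, and hence evaluating at each of the two components $i\in\{0,1\}$ yields a short exact sequence of $S$-modules $0\rt P_i\rt G_i\rt P'_i\rt 0$. Since $P'_i\in\cp(S)$ is projective, this sequence splits, so $G_i\cong P_i\oplus P'_i$ is again finitely generated projective. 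Thus $G_1, G_0\in\cp(S)$, and since $(G_1\st{g}\rt G_0)$ is already an object of $\mon(\omega, \G)$ (so that $g$ is monic with $\cok g$ an $R$-module), it lies in $\mon(\omega, \cp)$. This confirms that $\mon(\omega, \cp)$ is extension-closed.

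With both hypotheses in hand, \cite[Lemma 10.20]{buhler2010exact} applies and delivers the conclusion: $\mon(\omega, \cp)$ is an exact category with the exact structure inherited from $\mon(\omega, \G)$, in which it sits as an extension-closed subcategory. I do not anticipate any genuine obstacle here. The only point requiring care is the observation that a conflation in $\mon(\omega, \G)$ evaluates componentwise to short exact sequences of $S$-modules at each spot, which is precisely how the exact structure was set up in Theorem \ref{sexact}; after that reduction everything rests on the elementary fact that an extension of a projective module splits, so that $\cp(S)$ is closed under extensions.
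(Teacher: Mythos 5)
Your proof is correct and takes essentially the same route as the paper: the paper likewise deduces the corollary from the fact that $\cp(S)$ is closed under extensions together with \cite[Lemma 10.20]{buhler2010exact}. The only difference is that you spell out explicitly the componentwise splitting argument (an extension of a projective by a projective splits) that the paper leaves implicit.
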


The remainder of this section is devoted to show that $\mon(\omega, \G)$ is a Frobenius category. Recall that an exact category is called a Frobenius category if it has enough projectives and injectives and the projectives coincide with the injectives. The importance of Frobenius categories lies in their natural relationship with triangulated categories. Namely, the stable category of a Frobenius category admits a triangulated structure, where the shift functor is given by the inverse of the syzygy functor on the stable category, see \cite{happel1988triangulated, keller1990chain}. Triangulated categories of this type, are referred to as “algebraic” in the terminology of Keller  \cite{keller2006differential}.

\begin{lemma}\label{lem3}If $P$ is a  projective $S$-module, then
$(P\st{\id}\rt P)$ and $(P\st{\omega}\rt P)$ are
projective objects in $\mon(\omega, \G)$.
\end{lemma}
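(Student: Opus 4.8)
The plan is to verify directly the defining lifting property of projective objects in an exact category: an object is projective precisely when every morphism into the target of a deflation (admissible epic) factors through that deflation. So I will fix an arbitrary deflation $\pi=(\pi_1,\pi_0)\colon (X_1\st{x}\rt X_0)\rt(G_1\st{g}\rt G_0)$ in $\mon(\omega,\G)$ together with a morphism from one of my two objects to $(G_1\st{g}\rt G_0)$, and produce a lift. Two structural facts will be used repeatedly: being the deflation of a conflation, $\pi$ is componentwise surjective, so $\pi_1$ and $\pi_0$ are epimorphisms of $S$-modules; and the source $(X_1\st{x}\rt X_0)$ is itself an object of $\mon(\omega,\G)$, so $x$ is monic and $\cok x$ is an $R$-module, that is, $\omega X_0\subseteq\im x$.

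For $(P\st{\id}\rt P)$ the argument is the easy one. A morphism $\alpha=(\alpha_1,\alpha_0)\colon (P\st{\id}\rt P)\rt(G_1\st{g}\rt G_0)$ satisfies $\alpha_0=g\alpha_1$, so it is determined by $\alpha_1$. Since $P$ is projective and $\pi_1$ is surjective, I lift $\alpha_1$ to $\beta_1\colon P\rt X_1$ with $\pi_1\beta_1=\alpha_1$, and set $\beta_0:=x\beta_1$. Then $x\beta_1=\beta_0=\beta_0\id$ shows $(\beta_1,\beta_0)$ is a morphism in $\mon(\omega,\G)$, and $\pi_0\beta_0=\pi_0 x\beta_1=g\pi_1\beta_1=g\alpha_1=\alpha_0$ shows it lifts $\alpha$.

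The case $(P\st{\omega}\rt P)$ is where the real content lies. Now a morphism $\alpha=(\alpha_1,\alpha_0)$ satisfies $g\alpha_1=\alpha_0\omega$, and the naive attempt to lift $\alpha_1$ first fails because one cannot \emph{divide by} $\omega$ in the first coordinate. Instead I lift $\alpha_0$: using projectivity of $P$ and surjectivity of $\pi_0$ I obtain $\beta_0\colon P\rt X_0$ with $\pi_0\beta_0=\alpha_0$. The key step is to produce the first coordinate $\beta_1$. Here I invoke that $(X_1\st{x}\rt X_0)\in\mon(\omega,\G)$, so $\omega X_0\subseteq\im x$; hence $\beta_0\omega$ has image in $\im x$, and since $x$ is monic it factors uniquely as $\beta_0\omega=x\beta_1$ for a unique $\beta_1\colon P\rt X_1$. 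By construction $x\beta_1=\beta_0\omega$, so $(\beta_1,\beta_0)$ is a morphism in $\mon(\omega,\G)$; it remains only to check $\pi_1\beta_1=\alpha_1$, which I get from $g(\pi_1\beta_1)=\pi_0 x\beta_1=\pi_0\beta_0\omega=\alpha_0\omega=g\alpha_1$ together with the fact that $g$ is monic.

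I expect the only genuine obstacle to be the construction of $\beta_1$ in the second case; everything else is a diagram chase using the defining commutativity relations and the two structural facts above. The crux is the observation that the defining condition of $\mon(\omega,\G)$ on the source object, namely $\omega X_0\subseteq\im x$, is exactly what allows $\beta_0\omega$ to be pulled back along the monomorphism $x$. This is also the conceptual reason why the objects $(P\st{\omega}\rt P)$, rather than arbitrary $(P\st{f}\rt P)$, are forced to be projective.
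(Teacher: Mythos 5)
Your proposal is correct and its technical core is essentially the same as the paper's: both arguments lift the appropriate component of a map through the componentwise-surjective deflation using projectivity of $P$, then manufacture the other component from the fact that $\omega$ annihilates the cokernel of the structure map of the source (so that multiplication by $\omega$ factors uniquely through that monomorphism), and conclude with a monomorphism/non-zerodivisor cancellation. The only difference is which characterization of projectivity is verified: you establish the lifting property against an arbitrary deflation, whereas the paper shows that every conflation ending in $(P\st{\id}\rt P)$ or $(P\st{\omega}\rt P)$ splits (i.e., lifts the identity map); these are equivalent in any exact category, so both proofs are complete, and yours has the minor advantage of not invoking that equivalence (which rests on the pull-back axiom).
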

\begin{proof}
First, we prove the case for $(P\st{\id}\rt P)$. To do this,
assume that $0\lrt(E_1\st{e_1}\rt E_0)\lrt(G_1\st{g_1}\rt G_0)\st{\varphi=(\varphi_1,\varphi_0)}\lrt(P\st{\id}\rt P)\lrt 0$
is a short exact sequence in $\mon(\omega, \G)$. Since $0\rt E_1\rt G_1\st{\varphi_1}\rt P\rt 0$ is a short exact sequence of $S$-modules and $P$ is projective, there exists a morphism $\psi_1:P\rt G_1$ such that $\varphi_1\psi_1=\id_P$.
Set $\psi_0:=g_1\psi_1$. Now it is easily seen that $\varphi\psi=\id_{(P\st{id}\rt P)}$, and so $(P\st{\id}\rt P)$ is a projective object of $\mon(\omega, \G)$. Next we show that $(P\st{\omega}\rt P)$ is also a projective object of $\mon(\omega, \G)$. Take a short exact sequence $0\lrt (E_1\st{e_1}\rt E_0)\lrt (G_1\st{g_1}\rt G_0)\st{\varphi=(\varphi_1,\varphi_0)}\lrt (P\st{\omega}\rt P)\lrt 0$ in $\mon(\omega, \G)$. As $P$ is projective, there is a morphism $\psi_0:P\rt G_0$ such that $\varphi_0\psi_0=\id_P$. Since $\omega\cok g_1=0$, there is a morphism $\psi_1:P\rt G_1$ making the following diagram commutative
{\footnotesize\[\xymatrix{ & P\ar[d]^{\psi_0\omega}\ar[dl]_{\psi_1} & \\ G_1\ar[r]^{g_1}~& G_0\ar[r]& \cok g_1 .}\]}Now using the fact that $\omega$ is non-zerodivisor, one  may easily infer that $\varphi\psi=\id_{(P\st{\omega}\rt P)}$, and so $(P\st{\omega}\rt P)$ is a projective object of $\mon(\omega, \G)$. Thus the proof is finished.
\end{proof}

\begin{lemma}\label{lem4}
Let $P$ be a projective $S$-module. Then $(P\st{\id}\rt P)$ and $(P\st{\omega}\rt P)$ are
injective objects in $\mon(\omega, \G)$.
\end{lemma}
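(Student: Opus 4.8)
The plan is to establish injectivity by the usual splitting criterion: for an arbitrary conflation
\[0\lrt(P\st{\id}\rt P)\st{\iota=(\iota_1,\iota_0)}\lrt(G_1\st{g_1}\rt G_0)\lrt(E_1\st{e_1}\rt E_0)\lrt 0\]
in $\mon(\omega,\G)$, I would produce a retraction $r=(r_1,r_0)$ of $\iota$, and handle $(P\st{\omega}\rt P)$ the same way. Since a conflation is by definition a short exact sequence in $\mon(S)$ with terms in $\mon(\omega,\G)$, it is short exact componentwise, so $0\rt P\rt G_i\rt E_i\rt 0$ is exact with $E_i\in\G(S)$ for $i\in\{0,1\}$. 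As $P$ is a direct summand of a finite free module and $\Ext^1_S(E_i,S)=0$, we get $\Ext^1_S(E_i,P)=0$; hence each component sequence splits. The real content is to choose the two splittings compatibly so that they assemble into a morphism of pairs.

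For $(P\st{\id}\rt P)$ the commutativity of $\iota$ reads $\iota_0=g_1\iota_1$. I would split the \emph{bottom} component, obtaining $s_0\colon G_0\rt P$ with $s_0\iota_0=\id_P$, and then set $r:=(s_0g_1,\,s_0)$. This is a morphism into $(P\st{\id}\rt P)$ since the required square $\id\cdot(s_0g_1)=s_0\cdot g_1$ holds trivially, and it retracts $\iota$ because $s_0\iota_0=\id_P$ and $(s_0g_1)\iota_1=s_0(g_1\iota_1)=s_0\iota_0=\id_P$. Thus the conflation splits and $(P\st{\id}\rt P)$ is injective.

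The case $(P\st{\omega}\rt P)$ is the delicate one, and I expect it to be the main obstacle: a naive dualization of Lemma \ref{lem3} fails, because recovering the second component of the retraction by extending the first along $g_1$ would require $\Ext^1_S(\cok g_1,P)=0$, which is false in general (note $\cok g_1$ is $\omega$-torsion, so e.g. $\Ext^1_S(R,S)\neq 0$). The fix is to use the companion map: since $\cok g_1$ is annihilated by $\omega$ we have $\omega G_0\subseteq\im g_1$, and as $g_1$ is monic there is a unique $g_0\colon G_0\rt G_1$ with $g_1g_0=\omega\cdot\id_{G_0}$; one then checks $g_0g_1=\omega\cdot\id_{G_1}$ as well (this is precisely the pair of Lemma \ref{lem1}). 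Here the commutativity of $\iota$ reads $g_1\iota_1=\omega\iota_0$, so $g_1(g_0\iota_0)=\omega\iota_0=g_1\iota_1$, and cancelling the monic $g_1$ gives the key identity $g_0\iota_0=\iota_1$.

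With this in hand I would split the \emph{top} component, obtaining $s_1\colon G_1\rt P$ with $s_1\iota_1=\id_P$, and set $r:=(s_1,\,s_1g_0)$. It is a morphism into $(P\st{\omega}\rt P)$ because $(s_1g_0)g_1=s_1(\omega\cdot\id_{G_1})=\omega\cdot s_1$, which is exactly the square demanded by the backward multiplication $\omega$; and it retracts $\iota$ since $s_1\iota_1=\id_P$ and $(s_1g_0)\iota_0=s_1(g_0\iota_0)=s_1\iota_1=\id_P$. Hence this conflation splits as well, so $(P\st{\omega}\rt P)$ is injective, completing the proof. The only nonformal input is the existence and properties of $g_0$, which is where the monomorphism hypothesis and the $\omega$-annihilation of cokernels are genuinely used.
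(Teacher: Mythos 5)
Your proof is correct, and its overall skeleton matches the paper's: take a conflation starting at the given object, split one component using $\Ext^1_S(E,P)=0$ for $E\in\G(S)$ and $P$ projective, then manufacture the other component of the retraction; both arguments also rely on the same standard criterion that an object of an exact category is injective once every conflation emanating from it splits (legitimate here, since the push-out axiom was verified for $\mon(\omega,\G)$ before this lemma). The genuine difference is how the second component is produced in the delicate case $(P\st{\omega}\rt P)$. The paper splits the top component of the inflation into the middle term (written $(E_1\st{e_1}\rt E_0)$ there), obtaining $\psi_1$ with $\psi_1\varphi_1=\id_P$, and then pushes out the sequence $0\rt E_1\st{e_1}\rt E_0\rt\cok e_1\rt 0$ along $\omega\psi_1$, quoting \cite[Lemma 2.11]{stablebahlekeh} (in essence, that $\Ext^1_S(\cok e_1,P)$ is killed by $\omega$) to conclude that the pushed-out extension splits; this yields $\psi_0$ with $\psi_0e_1=\omega\psi_1$ abstractly. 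You instead invoke the companion morphism $g_0$ of the middle object, with $g_1g_0=\omega\id_{G_0}$ and $g_0g_1=\omega\id_{G_1}$ --- precisely the content of Lemma \ref{lem1}, which you rederive inline --- and write the retraction down explicitly as $(s_1,\,s_1g_0)$, verifying the morphism and retraction identities by cancelling the monic $g_1$. This buys an explicit formula and self-containedness (no push-out, no external citation), at essentially no cost: Lemma \ref{lem1} appears after Lemma \ref{lem4} in the paper's ordering, but there is no circularity, as neither its proof nor your inline derivation uses injectivity. You also treat $(P\st{\id}\rt P)$ explicitly, which the paper dismisses as easy, and your side remark on why a naive dualization of Lemma \ref{lem3} must fail ($\Ext^1_S(R,S)\cong R\neq 0$, so one cannot extend $s_1$ itself, only $\omega s_1$) is accurate and pinpoints exactly where the $\omega$-annihilation hypothesis enters.
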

\begin{proof}
Let us prove only the case for $(P\st{\omega}\rt P)$. The other one is obtained easily. Assume that $0\lrt (P\st{\omega}\rt P)\st{\varphi=(\varphi_1,\varphi_0)}\lrt (E_1\st{e_1}\rt E_0)\lrt (G_1\st{g_1}\rt G_0)\lrt 0$ is a short exact sequence in $\mon(\omega, \G)$. Since $P$ is an injective object in $\G(S)$, there is a morphism $\psi_1:E_1\rt P$ such that $\psi_1\varphi_1=\id_P$.
Consider the following push-out diagram:
\[\xymatrix{&\\ \gamma:0 \ar[r] & E_1 \ar[r]^{e_1} \ar[d]_{\omega\psi_1} & E_0 \ar[r] \ar[d] & \cok e_1 \ar[r] \ar@{=}[d] & 0
\\ (\omega\psi_1)\gamma: 0 \ar[r] & P \ar[r] & T \ar[r] & \cok e_1 \ar[r] & 0. &}\]
Since $\omega\cok e_1=0$, applying \cite[Lemma 2.11]{stablebahlekeh} yields that the lower row is split. Hence, there is a morphism $\psi_0:E_0\lrt P$ such that $\psi_0e_1=\omega \psi_1$. Now it can be easily checked that $\psi\varphi=\id_{(P\st{\omega}\rt P)}$, and so, the proof is completed.
\end{proof}

In the next couple of results, we show that $\mon(\omega, \G)$ has enough projectives and injectives. In this direction, the following lemma plays a key role.
\begin{lemma}\label{lem1}
Let $(G_1\st{g_1}\rt G_0)$ be an arbitrary object of $\mon(\omega, \G)$. Then there exists
an  $S$-homomorphism $(G_0\st{g_0}\rt G_1)$ such that $g_1g_0=\omega.\id_{G_0}$ and $g_0g_1=\omega.\id_{G_1}$. In particular, $(G_0\st{g_0}\rt G_1)$ is an object of $\mon(\omega, \G)$.
\end{lemma}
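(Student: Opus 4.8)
The plan is to construct $g_0$ by ``dividing multiplication by $\omega$ through $g_1$''. Since $(G_1\st{g_1}\rt G_0)$ lies in $\mon(\omega,\G)$, the cokernel $\cok g_1$ is annihilated by $\omega$, which says precisely that $\omega G_0\subseteq\im g_1$. Because $g_1$ is a monomorphism, it restricts to an isomorphism $G_1\cong\im g_1$; precomposing the inverse of this isomorphism with the corestriction $G_0\rt\im g_1$ of multiplication by $\omega$ yields an $S$-linear map $g_0\colon G_0\rt G_1$. By construction $g_1g_0=\omega\cdot\id_{G_0}$, and $g_0$ is the unique such map since $g_1$ is injective. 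For the second relation I would exploit injectivity of $g_1$ once more: for any $z\in G_1$ we have $g_1(g_0g_1(z))=(g_1g_0)(g_1(z))=\omega g_1(z)=g_1(\omega z)$, and cancelling the monomorphism $g_1$ gives $g_0g_1(z)=\omega z$, hence $g_0g_1=\omega\cdot\id_{G_1}$.

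It then remains to check that $(G_0\st{g_0}\rt G_1)$ belongs to $\mon(\omega,\G)$. The modules $G_0,G_1$ are Gorenstein projective by hypothesis, so the only two points to verify are that $g_0$ is a monomorphism and that $\cok g_0$ is an $R$-module. The latter is immediate from the relation just obtained: every element $\omega z$ with $z\in G_1$ equals $g_0(g_1(z))\in\im g_0$, so $\omega G_1\subseteq\im g_0$ and $\cok g_0$ is annihilated by $\omega$.

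The main obstacle is injectivity of $g_0$, and this is exactly where the Gorenstein projective hypothesis is essential. If $g_0(x)=0$, then $\omega x=g_1(g_0(x))=0$, so it suffices to know that $\omega$ is a non-zerodivisor on $G_0$. I would deduce this from the fact that a finitely generated Gorenstein projective module is torsionless: writing $G_0^{\ast}=\Hom_S(G_0,S)$, reflexivity gives $G_0\cong\Hom_S(G_0^{\ast},S)$, and applying $\Hom_S(-,S)$ to a surjection $S^m\twoheadrightarrow G_0^{\ast}$ embeds $G_0\cong\Hom_S(G_0^{\ast},S)$ into $\Hom_S(S^m,S)\cong S^m$. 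Since $\omega$ is a non-zerodivisor on $S$, hence on $S^m$, it is a non-zerodivisor on the submodule $G_0$; thus $\omega x=0$ forces $x=0$ and $g_0$ is monic. This completes the verification that $(G_0\st{g_0}\rt G_1)\in\mon(\omega,\G)$.
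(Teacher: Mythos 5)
Your proof is correct, but it follows a genuinely different route from the paper's. The paper constructs $g_0$ homologically: it pushes out the extension $\eta\colon 0\rt G_1\st{g_1}\rt G_0\rt \cok g_1\rt 0$ along $\omega\cdot\id_{G_1}$ and observes that the resulting extension $\omega\eta$ splits, because $\omega$ lies in the annihilator of $\cok g_1$ and hence kills the class of $\eta$ in $\Ext^1_S(\cok g_1, G_1)$; a retraction of the split sequence then yields $g_0$ with $g_0g_1=\omega\cdot\id_{G_1}$, and the other identity $g_1g_0=\omega\cdot\id_{G_0}$ is deduced afterwards from the inclusion $\omega G_0\subseteq g_1(G_1)$ --- a deduction which tacitly cancels $\omega$, i.e.\ uses that $\omega$ is a non-zerodivisor on $G_0$. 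You instead build $g_0$ elementarily, as the corestriction $G_0\rt\im g_1$ of multiplication by $\omega$ (well defined precisely because $\omega\cok g_1=0$) followed by the inverse of the isomorphism $G_1\st{\sim}\rt\im g_1$ induced by the monomorphism $g_1$; this gives $g_1g_0=\omega\cdot\id_{G_0}$ by construction, and your passage to $g_0g_1=\omega\cdot\id_{G_1}$ by cancelling the monomorphism $g_1$ is actually tighter than the paper's converse step, since it needs injectivity of $g_1$ and nothing else. Your route buys elementarity and completeness: it avoids the pushout/Ext argument entirely, and it supplies explicitly the one fact the paper leaves implicit in two places (in the deduction of $g_1g_0=\omega\cdot\id_{G_0}$, and in the claim that $g_0$ is ``evidently'' a monomorphism), namely that $\omega$ is a non-zerodivisor on the Gorenstein projective module $G_0$; your verification via reflexivity, embedding $G_0\cong\Hom_S(\Hom_S(G_0,S),S)$ into $S^m$ by dualizing a surjection $S^m\rt\Hom_S(G_0,S)$, is correct. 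What the paper's argument buys in return is uniformity: the splitting principle (an extension whose class is killed by an element annihilating its cokernel is split) is the same device the paper invokes again, for instance in the proof of Lemma \ref{lem4}, so the pushout formulation keeps those proofs parallel.
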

\begin{proof}
By our hypothesis, there exists a short exact sequence of $S$-modules $0\rt G_1\st{g_1}\rt G_0\rt\cok g_1\rt 0$ such that $\cok g_1$ is an $R$-module. Consider the following push-out diagram:
\[\xymatrix{&\\ \eta:0 \ar[r] & G_1 \ar[r]^{g_1} \ar[d]_{\omega} & G_0 \ar[r] \ar[d] & \cok g_1 \ar[r] \ar@{=}[d] & 0
\\ \omega\eta: 0 \ar[r] & G_1 \ar[r] & T \ar[r] & \cok g_1 \ar[r] & 0.}\]
Since $\omega\cok g_1=0$, $\omega\eta$ will be split. Thus, one may find an $S$-homomorphism $g_0:G_0\rt G_1$ such that $g_0g_1=\omega.\id_{G_{1}}$. Another use of the fact that $\omega\cok g_1=0$, leads us to infer that $\omega G_0\subseteq g_1(G_1)$. This fact in conjunction with equality $g_0g_1=\omega.\id_{G_1}$ gives rise to the equality $g_1g_0=\omega.\id_{G_0}$. Now we show that $(G_0\st{g_0}\rt G_1)$ is an object of $\mon(\omega, \G )$. As $g_0$ is evidently a monomorphism, we only need to prove that $\cok g_0$ is an $R$-module. Consider the short exact sequence of $S$-modules $0\rt G_0\st{g_0}\rt G_1\st{\pi}\rt\cok g_0\rt 0$. Since $g_0g_1=\omega.\id_{G_1}$, $\omega G_1\subseteq g_0(G_0)$. Thus for any $y\in\cok g_0$, there exists an object $x\in G_1$ such that $\pi(x)=y$. So we have that $\omega y=\omega\pi(x)=\pi(\omega x)\in\pi g_0(G_0)=0$. This indeed  means that $\omega\cok g_0=0$, and so, $\cok g_0$ is an $R$-module, as required.
\end{proof}

\begin{lemma}\label{lem2}
The category $\mon(\omega, \G)$ has enough projectives.
\end{lemma}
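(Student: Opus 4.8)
The goal is to show that $\mon(\omega, \G)$ has enough projectives, meaning that for every object there is a deflation (admissible epic) onto it from a projective object of $\mon(\omega, \G)$. By Lemma \ref{lem3}, the objects $(P\st{\id}\rt P)$ and $(P\st{\omega}\rt P)$ with $P$ projective are projective objects, so the natural plan is to construct, for an arbitrary object $(G_1\st{g_1}\rt G_0)$, an admissible epic onto it whose source is a direct sum of objects of these two types.

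\emph{First}, I would choose projective covers (or surjections from free modules) of $G_1$ and $G_0$. Since $G_1, G_0\in\G(S)$ are finitely generated, pick projective $S$-modules $P_1, P_0$ with epimorphisms $P_1\twoheadrightarrow G_1$ and $P_0\twoheadrightarrow G_0$. The candidate projective object mapping onto $(G_1\st{g_1}\rt G_0)$ should be of the form $(P_0\st{\omega}\rt P_0)\oplus(P_1\st{\id}\rt P_1)$, which is projective by Lemma \ref{lem3}. The reason for the mix of $\omega$ and $\id$ components is dictated by Lemma \ref{lem1}: the pairing $(G_0\st{g_0}\rt G_1)$ with $g_1g_0=g_0g_1=\omega$ is exactly what lets one build a morphism in $\mon(\omega,\G)$ from such a sum. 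Concretely, I would construct a morphism $(\varphi_1,\varphi_0)$ from this sum to $(G_1\st{g_1}\rt G_0)$ using the lift $P_0\to G_0$ together with $g_0\colon G_0\to G_1$ (from Lemma \ref{lem1}) to feed the $(P_0\st{\omega}\rt P_0)$ summand into the top row, and the lift $P_1\to G_1$ with $g_1$ for the $(P_1\st{\id}\rt P_1)$ summand, arranging the square to commute.

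\emph{Second}, I would verify that the resulting morphism is an admissible epic, i.e. that it is componentwise surjective and that its kernel again lies in $\mon(\omega, \G)$, so that the whole thing is a conflation in the exact structure of Theorem \ref{sexact}. Surjectivity of $\varphi_0$ is clear from the lift $P_0\twoheadrightarrow G_0$; surjectivity of $\varphi_1$ should follow because the image of $\varphi_1$ contains the image of the chosen lift $P_1\to G_1$ together with $g_0$-contributions, and one uses $\omega G_0\subseteq g_1(G_1)$ and $\omega G_1\subseteq g_0(G_0)$ from Lemma \ref{lem1}. To confirm the kernel belongs to $\mon(\omega, \G)$, I would invoke the fact (established in the proof of Theorem \ref{sexact}) that $\mon(\omega,\G)$ is closed under kernels of deflations, together with $\G(S)$ being closed under kernels of epimorphisms and the snake lemma to control the cokernel of the induced monic on the kernel.

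\emph{The main obstacle} I anticipate is arranging the morphism so that its top-row component $\varphi_1$ is genuinely surjective while the square commutes: the $\id$-summand alone surjects onto $G_1$ only after accounting for how $g_0$ mixes the two summands, and one must check that the combined map hits all of $G_1$ rather than just $g_0(G_0)+\text{(lift image)}$. Getting the bookkeeping right—so that the constructed pair is simultaneously a morphism in the category, componentwise epic, and has kernel in $\mon(\omega,\G)$—is where the real work lies; once surjectivity in both degrees is secured, the conflation property is routine given the earlier closure results.
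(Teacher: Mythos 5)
Your proposal is correct and follows essentially the same route as the paper: the same projective object $(P_1\oplus P_0\st{\id\oplus\omega}\lrt P_1\oplus P_0)$, the same morphism components built from the lifts $h_1, h_0$ and the map $g_0$ of Lemma \ref{lem1}, and the same appeal to the closure of $\mon(\omega,\G)$ under kernels of epimorphisms from Theorem \ref{sexact}. The ``main obstacle'' you anticipate is in fact vacuous, since the top component of your morphism restricted to the $(P_1\st{\id}\rt P_1)$ summand is the chosen epimorphism $h_1$ itself, so surjectivity in the top degree is immediate with no mixing of summands required.
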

\begin{proof}
Assume that $(G_1\st{g_1}\rt G_0)$ is an arbitrary object of $\mon(\omega, \G)$. Take
epimorphisms $h_1:P_1\rt G_1$ and $h_0:P_0\rt G_0$, where $P_1, P_0\in\mathcal{P}(S)$.
Consider the object $(P_1\oplus P_0\st{\id\oplus\omega}\lrt P_1\oplus P_0)$ in $\mon(\omega, \G)$ which
is projective, thanks to Lemma \ref{lem3}. Since $(G_1\st{g_1}\rt G_0)\in\mon(\omega, \G)$,  Lemma \ref{lem1} gives us a unique  monomorphism $g_0:G_0\rt G_1$ with $\cok g_0\in\md R$. So, we may have an epimorphism $(P_1\oplus P_0\st{\id\oplus\omega}\lrt P_1\oplus P_0)\st{\varphi=(\varphi_1,\varphi_0)}\lrt (G_1\st{g_1}\rt G_0)$ in $\mon(\omega, \G)$, where $\varphi_0=[g_1h_1~~h_0]$ and $\varphi_1=[h_1~~g_0h_0]$. So, as indicated in the proof of Theorem \ref{sexact}, $\Ker\varphi=(E_1\st{e_1}\rt E_0)\in\mon(\omega, \G)$, as well. Thus the proof is finished.
\end{proof}

\begin{lemma}\label{lem22}
 $\mon(\omega, \G)$ has enough injectives.
\end{lemma}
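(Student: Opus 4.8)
The plan is to run the exact dual of the proof of Lemma \ref{lem2}. Let $(G_1\st{g_1}\rt G_0)$ be an arbitrary object of $\mon(\omega,\G)$ and let $g_0:G_0\rt G_1$ be the companion monomorphism provided by Lemma \ref{lem1}, so that $g_1g_0=\omega.\id_{G_0}$ and $g_0g_1=\omega.\id_{G_1}$. Since $\G(S)$ is a Frobenius category whose projective-injective objects are precisely the projective $S$-modules, each $G_i$ admits a monomorphism $t_i:G_i\rt P_i$ with $P_i\in\cp(S)$ and $\cok t_i\in\G(S)$ (take $t_i$ to be the embedding of $G_i$ as a cocycle of a complete resolution). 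I would then form the object $\mathbf{I}=(P_1\oplus P_0\st{\omega\oplus\id}\rt P_1\oplus P_0)$, which is injective by Lemma \ref{lem4}, together with the morphism $\psi=(\psi_1,\psi_0):(G_1\st{g_1}\rt G_0)\rt\mathbf{I}$ given by $\psi_1=\left[\begin{smallmatrix}t_1\\ t_0g_1\end{smallmatrix}\right]$ and $\psi_0=\left[\begin{smallmatrix}t_1g_0\\ t_0\end{smallmatrix}\right]$. A direct check, using $g_0g_1=\omega.\id_{G_1}$, shows the defining square commutes, and since $t_1,t_0$ are monic, so are $\psi_1,\psi_0$; hence $\psi$ is a monomorphism in $\mor(S)$.

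It then remains to verify that $\cok\psi=(C_1\st{\bar m}\rt C_0)$ lies in $\mon(\omega,\G)$, which is what makes $\psi$ an admissible monic into an injective object. First, projecting $P_1\oplus P_0$ onto $\cok t_1\oplus\cok t_0$ annihilates $\im\psi_1$ and $\im\psi_0$, so one obtains short exact sequences $0\rt G_0\rt C_1\rt\cok t_1\oplus\cok t_0\rt 0$ and $0\rt G_1\rt C_0\rt\cok t_1\oplus\cok t_0\rt 0$, the kernels being $(G_1\oplus G_0)/\{(x,g_1x)\}\cong G_0$ and $(G_1\oplus G_0)/\{(g_0y,y)\}\cong G_1$ respectively. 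As $\G(S)$ is closed under extensions, this gives $C_1,C_0\in\G(S)$.

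The main obstacle is to prove that $\bar m$ is a monomorphism, since a first element chase only identifies $\Ker\bar m$ with $\{y\in G_0 : g_0y\in\omega G_1\}/\im g_1$, which is not visibly zero. The reduction to this form already uses that $\cok t_1$ is torsion-free (it is Gorenstein projective, hence a submodule of a free module, so $\omega$ acts injectively on it, whence $\omega P_1\cap\im t_1=\omega\,\im t_1$). To finish I would show $\{y : g_0y\in\omega G_1\}=\im g_1$: the inclusion $\supseteq$ is immediate from $g_0g_1=\omega.\id$, while for $\subseteq$, writing $g_0y=\omega z=g_0g_1z$ and using that $g_0$ is monic gives $y=g_1z$. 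Thus $\Ker\bar m=0$. Finally, the snake lemma applied to the two rows above with vertical maps $g_1$, $\omega\oplus\id$, $\bar m$ (all left and middle kernels vanishing) yields $0\rt\cok g_1\rt P_1/\omega P_1\rt\cok\bar m\rt 0$; since $P_1/\omega P_1$ is an $R$-module, so is its quotient $\cok\bar m$. Hence $\cok\psi\in\mon(\omega,\G)$, the sequence $0\rt(G_1\st{g_1}\rt G_0)\st{\psi}\rt\mathbf{I}\rt\cok\psi\rt 0$ is a conflation with injective middle term, and therefore $\mon(\omega,\G)$ has enough injectives.
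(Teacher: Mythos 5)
Your proof is correct, and its first half is identical to the paper's: both embed $(G_1\st{g_1}\rt G_0)$ into the injective object $(Q_1\oplus Q_0\st{\omega\oplus\id}\rt Q_1\oplus Q_0)$ by the very same morphism, assembled from the companion map $g_0$ of Lemma \ref{lem1} and embeddings $t_i:G_i\rt P_i$ into projectives with Gorenstein projective cokernels (the paper's $h_i:G_i\rt Q_i$). The genuine difference is in how one checks that the cokernel of this monomorphism lies in $\mon(\omega,\G)$. The paper identifies the cokernel explicitly as $(G'_1\oplus Q_0\st{l_1}\rt Q_1\oplus G'_0)$: it lifts $h_0g_1$ and $h_1g_0$ to maps $\alpha,\beta$ between the projectives, manufactures four further maps $\gamma,\gamma',\varepsilon,\varepsilon'$, and exhibits a companion matrix $l_0$ with $l_0l_1=\omega\oplus\omega$, which gives monicity of $l_1$ at once; the condition $\omega\cok l_1=0$ then follows from the snake lemma. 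You instead treat $\cok\psi=(C_1\st{\bar m}\rt C_0)$ abstractly: the components are extensions of $\cok t_1\oplus\cok t_0$ by $G_0$ and by $G_1$, hence Gorenstein projective; monicity of $\bar m$ comes from an element computation reducing $\Ker\bar m$ to $\{y\in G_0: g_0y\in\omega G_1\}/\im g_1$, which vanishes because $g_0y=\omega z=g_0g_1z$ and $g_0$ is monic (and your reduction correctly invokes that $\cok t_1$, being a submodule of a projective, is $\omega$-torsion-free); and $\omega\cok\bar m=0$ again comes from the snake lemma. Both verifications are sound: the paper's pays the bookkeeping cost of six auxiliary maps but names the cosyzygy concretely, whereas yours is leaner but leaves the cokernel unidentified. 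One small imprecision in your write-up: the final snake lemma must be applied to the defining sequences $0\rt G_1\st{\psi_1}\rt P_1\oplus P_0\rt C_1\rt 0$ and $0\rt G_0\st{\psi_0}\rt P_1\oplus P_0\rt C_0\rt 0$ with vertical maps $g_1$, $\omega\oplus\id$, $\bar m$ --- not to the two extension sequences you displayed just before, for which the listed verticals do not make sense --- but the verticals you name make the intent unambiguous, and with all three kernels zero the snake indeed yields $0\rt\cok g_1\rt P_1/\omega P_1\rt\cok\bar m\rt 0$.
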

\begin{proof}
Consider an arbitrary object $(G_1\st{g_1}\rt G_0)\in\mon(\omega, \G)$. Since $G_1, G_0\in\G(S)$, one may take short exact sequences of $S$-modules, $0\rt G_1\st{h_1}\rt Q_1\st{f_1}\rt G'_1\rt 0$ and $0\rt G_0\st{h_0}\rt Q_0\st{f_0}\rt G'_0\rt 0$, where $Q_1, Q_0\in\mathcal{P}(S)$ and $G'_1, G_0'\in\G(S)$. By Lemma \ref{lem1}, there is an object $(G_0\st{g_0}\rt G_1)\in\mon(\omega, \G)$.
So we will obtain a monomorphism $(G_1\st{g_1}\rt G_0)\st{\varphi}\lrt(Q_1\oplus Q_0\st{\omega\oplus\id}\rt Q_1\oplus Q_0)$, where $\varphi_1=[h_1~~ h_0g_1]^t$ and $\varphi_0=[h_1g_0~~h_0]^t$. In view of Lemma \ref{lem4}, $(Q_1\oplus Q_0\st{\omega\oplus\id}\rt Q_1\oplus Q_0)$ is an injective object of $\mon(\omega, \G)$. So, in order to complete the proof, it suffices to show that  $\cok\varphi=(\varphi_1,\varphi_0)$ is an object of $\mon(\omega, \G)$. This will be done by showing that there exists a short exact sequence, $0\lrt (G_1\st{g_1}\rt G_0)\st{\varphi}\lrt(Q_1\oplus Q_0\st{\omega\oplus\id}\rt Q_1\oplus Q_0)\st{\psi}\lrt(G'_1\oplus Q_0\st{l}\rt Q_1\oplus G'_0)\lrt 0$ in $\mon(\omega, \G)$.\\
$\bullet$ Since $Q_1, Q_0\in\mathcal{P}(S)$ and $G'_1, G'_0\in\G(S)$, there exist morphisms $\alpha:Q_1\rt Q_0$ and $\beta:Q_0\rt Q_1$ such that $\alpha h_1=h_0g_1$ and $\beta h_0=h_1g_0$. Thus we obtain induced morphisms $\varepsilon:G'_1\rt G'_0$  and $\varepsilon':G'_0\rt G'_1$  such that $\varepsilon f_1=f_0\alpha$ and  $\varepsilon'f_0=f_1\beta$.\\
$\bullet$ According to the above equalities, we have $(\omega - \beta\alpha)h_1=0$, and so, there exists
a morphism $\gamma: G'_1\rt Q_1$ such that $\gamma f_1=\omega - \beta\alpha$. {Analogously, one gets a morphism  $\gamma':G'_0\rt Q_0$ such that $\gamma'f_0=\omega-\alpha\beta$.}
Now, by letting $l_1:=\tiny {\left[\begin{array}{ll} \gamma & -\beta \\ {\varepsilon} & {f_0} \end{array} \right],}$
one may deduce that $(G'_1\oplus Q_0\st{l_1}\rt Q_1\oplus G'_0)$ is an object of $\mon(\omega, \G)$. {Indeed, considering the morphism $(Q_1\oplus G'_0\st{l_0}\rt G'_1\oplus Q_0)$,  where $l_0=\tiny {\left[\begin{array}{ll} f_1 & \varepsilon' \\ -\alpha & \gamma' \end{array} \right],}$ we have $l_0l_1=\tiny {\left[\begin{array}{ll} \omega & 0 \\ 0 & \omega \end{array} \right]}=\omega\oplus\omega$. This forces $l_1$ to be a monomorphism.} In particular, it is easy to check that $0\lrt (G_1\st{g_1}\rt G_0)\st{\varphi}\lrt(Q_1\oplus Q_0\st{\omega\oplus\id}\rt Q_1\oplus Q_0)\st{\psi}\lrt(G'_1\oplus Q_0\st{l_1}\rt Q_1\oplus G'_0)\lrt 0$, with $\psi_1=\tiny {\left[\begin{array}{ll} f_1 & 0 \\ {-\alpha} & {\id} \end{array} \right]}$ and $\psi_0=\tiny {\left[\begin{array}{ll} \id & -\beta \\ {0} & {f_0}\end{array} \right],}$ is a short exact sequence in { $\mon(S)$.  Since the middle term of this sequence lies in $\mon(\omega, \G)$, applying the snake lemma yields that $\cok l_1$ is an $R$-module. Thus the latter sequence is indeed a short exact sequence in $\mon(\omega, \G)$. }So we are done.
\end{proof}

Now we are ready to state the following result.

\begin{theorem}\label{frob}
$\mon(\omega, \G)$ is a Frobenius category. In particular, its projective-injective objects are  equal to direct summands of finite direct sums of objects of the form $(P\st{\id}\rt P)\oplus(Q\st{\omega}\rt Q)$ for some $P, Q\in\cp(S)$.
\end{theorem}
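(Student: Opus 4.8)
The plan is to establish the Frobenius property together with the explicit description of the projective-injective objects in one stroke, by proving that \emph{both} the class of projective objects and the class of injective objects of $\mon(\omega,\G)$ coincide with the class $\X$ of all direct summands of finite direct sums of objects of the form $(P\st{\id}\rt P)\oplus(Q\st{\omega}\rt Q)$, with $P,Q\in\cp(S)$. By Theorem~\ref{sexact} the category is exact, and by Lemmas~\ref{lem2} and~\ref{lem22} it has enough projectives and enough injectives; so the only missing ingredient for the Frobenius property is the equality of the projectives and the injectives, and this will drop out of the two identifications with $\X$. One inclusion is already in hand: by Lemmas~\ref{lem3} and~\ref{lem4} each of $(P\st{\id}\rt P)$ and $(Q\st{\omega}\rt Q)$ is simultaneously projective and injective, and since in any exact category the projectives (resp. injectives) are closed under finite direct sums and under direct summands, every object of $\X$ is both projective and injective. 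Thus $\X$ lies in the intersection of the two classes.

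For the reverse inclusions I would argue by a retract argument on each side. Given an arbitrary projective object $X=(G_1\st{g_1}\rt G_0)$, the construction carried out in the proof of Lemma~\ref{lem2} supplies an admissible epic $\pi\colon W\lrt X$ whose source is $W=(P_1\oplus P_0\st{\id\oplus\omega}\rt P_1\oplus P_0)\in\X$. Because $X$ is projective and $\pi$ is a deflation, the identity $\id_X$ lifts along $\pi$, giving a section $s\colon X\rt W$ with $\pi s=\id_X$; hence $X$ is a retract of $W$. Dually, for an arbitrary injective object $X$, the proof of Lemma~\ref{lem22} furnishes an admissible monic $\varphi\colon X\lrt V$ into $V=(Q_1\oplus Q_0\st{\omega\oplus\id}\rt Q_1\oplus Q_0)\in\X$; injectivity of $X$ splits $\varphi$, exhibiting $X$ as a retract of $V$. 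Once each such retract is recognised as a genuine direct summand lying in $\X$ (see the next paragraph), we obtain $\{\text{projectives}\}=\X=\{\text{injectives}\}$, which is exactly the claim that $\mon(\omega,\G)$ is Frobenius with projective-injective objects as stated.

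The one point that needs real care is the passage from ``$X$ is a retract of $W$'' to ``$X$ is a direct summand of $W$ with $X\in\X$''. The endomorphism $e=s\pi$ is idempotent in $\mon(\omega,\G)$, but $\mon(\omega,\G)$ need not be idempotent complete, so I would split $e$ in the ambient abelian category $\mor(S)$, where idempotents do split, obtaining $W\cong X\oplus Y$ in $\mor(S)$. It then remains to verify that the complement $Y=(Y_1\st{y}\rt Y_0)$ again belongs to $\mon(\omega,\G)$: its components $Y_1,Y_0$ are direct summands of the projective modules $P_1\oplus P_0$, hence projective and in particular Gorenstein projective; its structure map $y$ is a monomorphism, being a direct summand of the monomorphism $\id\oplus\omega$; and $\cok y$ is a direct summand of $\cok(\id\oplus\omega)\cong P_0/\omega P_0$, hence annihilated by $\omega$. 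This shows $\mon(\omega,\G)$ is closed under direct summands inside $\mor(S)$, so both $X$ and $Y$ lie in $\mon(\omega,\G)$ and $X\in\X$; the injective side is identical. Beyond this closure-under-summands verification, the argument is a formal assembly of Theorem~\ref{sexact} and Lemmas~\ref{lem2}, \ref{lem3}, \ref{lem4}, and~\ref{lem22}, so I expect this idempotent-splitting step to be the main---and essentially only---obstacle.
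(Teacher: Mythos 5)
Your proposal is correct and follows essentially the same route as the paper: exactness (Theorem \ref{sexact}), enough projectives and injectives (Lemmas \ref{lem2} and \ref{lem22}), the objects $(P\st{\id}\rt P)$ and $(Q\st{\omega}\rt Q)$ being projective-injective (Lemmas \ref{lem3} and \ref{lem4}), and splitting the conflations from the proofs of Lemmas \ref{lem2}/\ref{lem22} by lifting (resp. extending) the identity. The only divergence is your idempotent-splitting step, which in fact can be bypassed: once the deflation $\psi\colon W\rt X$ splits, one has $W\cong X\oplus\Ker\psi$ already in $\mor(S)$, and $\Ker\psi$ is precisely the first term $(L_1\rt L_0)$ of the conflation constructed in Lemma \ref{lem2}, hence already known to lie in $\mon(\omega, \G)$ (and dually for the injective side).
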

\begin{proof}In view of Theorem \ref{sexact}, $\mon(\omega, \G)$ is an exact category. Moreover, by Lemmas \ref{lem2} and \ref{lem22}, $\mon(\omega, \G)$ has enough projective and injective objects. Furthermore, the coincidence of projective and injective objects will follow from the second assertion. So let us prove the second assertion. To do this, assume that $(G_1\st{g_1}\rt G_0)$ is a projective object in $\mon(\omega, \G)$. The proof of Lemma \ref{lem2}, gives rise to the existence of a short exact sequence,
$0\lrt (L_1\rt L_0)\lrt(Q_1\oplus Q_0\st{\id\oplus\omega}\lrt Q_1\oplus Q_0)\st{\psi}\lrt (G_1\st{g_1}\rt G_0)\lrt 0$ in $\mon(\omega, \G)$, where $Q_0, Q_1\in\mathcal{P}(S)$. Now the projectivity of $(G_1\st{g_1}\rt G_0)$ forces  $\psi$ to be a split epimorphism. Thus, $(G_1\st{g_1}\rt G_0)$ will be a direct summand of $(Q_1\st{\id}\rt Q_1)\oplus (Q_0\st{\omega}\rt Q_0)$.  The case for injective objects is obtained dually. So the proof is finished.
\end{proof}

It is known that $\cp(S)$ is closed under kernels of epimorphisms. Moreover, any Gorenstein projective module with finite projective dimension is projective. These facts ensure that $\mon(\omega, \cp)$ is an admissible subcategory of $\mon(\omega, \G)$. Recall that an extension-closed exact subcategory $\mathcal{F'}$ of a Frobenius category $\mathcal{F}$ is said to be
an {\em admissible subcategory}, if any object $X\in\mathcal{F'}$  there exist conflations  $X\rt Q\rt X'$ and $X''\rt P\rt X$ in $\mathcal{F'}$ such that $P, Q$ are projective objects of $\mathcal{F}$. In this case, $\mathcal{F'}$ will be also a Frobenius category, and in particular, an object $X\in\mathcal{F'}$ is projective if and only if it is projective as an object of $\mathcal{F}$,  see \cite[page 46]{chen2012three}. So, we will derive the result below.

\begin{cor}\label{s1}$\mon(\omega, \cp)$ is a Frobenius subcategory of $\mon(\omega, \G)$, with the same projective-injective objects.
\end{cor}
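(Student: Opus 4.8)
The plan is to verify that $\mon(\omega, \cp)$ satisfies the definition of an admissible subcategory of the Frobenius category $\mon(\omega, \G)$, and then invoke the cited result of Chen to conclude that it is itself Frobenius with the claimed projective-injective objects. Recall that the three facts preceding the statement give us what we need: first, $\mon(\omega, \cp)$ is an extension-closed exact subcategory of $\mon(\omega, \G)$ (this was already established in the corollary immediately after Theorem \ref{sexact}, using that $\cp(S)$ is closed under extensions together with \cite[Lemma 10.20]{buhler2010exact}); second, $\cp(S)$ is closed under kernels of epimorphisms; and third, any Gorenstein projective module of finite projective dimension is in fact projective.

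First I would recall the definition of admissibility and isolate exactly what must be produced: for every object $X = (P_1 \st{f}\rt P_0)$ in $\mon(\omega, \cp)$, I must exhibit two conflations in $\mon(\omega, \cp)$,
\[
X \lrt Q \lrt X' \qquad\text{and}\qquad X'' \lrt P \lrt X,
\]
whose middle terms $Q, P$ are projective-injective objects of the ambient Frobenius category $\mon(\omega, \G)$. The key point is that the constructions already carried out in the proofs of Lemma \ref{lem2} and Lemma \ref{lem22} produce precisely such conflations, and I would check that when the input object lies in $\mon(\omega, \cp)$ the output stays within $\mon(\omega, \cp)$. Concretely, the proof of Lemma \ref{lem2} realizes $X$ as a quotient of $(P_1\oplus P_0 \st{\id\oplus\omega}\rt P_1\oplus P_0)$, which by Theorem \ref{frob} is a projective-injective object of $\mon(\omega, \G)$; dually, Lemma \ref{lem22} embeds $X$ into an object of the same form $(Q_1\oplus Q_0 \st{\omega\oplus\id}\rt Q_1\oplus Q_0)$.

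The step requiring genuine attention — and which I expect to be the main obstacle — is verifying that the kernels and cokernels arising in these two conflations again have projective (not merely Gorenstein projective) components, so that they lie in $\mon(\omega, \cp)$ rather than only in $\mon(\omega, \G)$. In the enough-projectives construction, the kernel $\Ker\varphi = (E_1\st{e_1}\rt E_0)$ fits into short exact sequences $0\rt E_i \rt P_i\oplus P_i \rt P_i \rt 0$ of $S$-modules; since $\cp(S)$ is closed under kernels of epimorphisms, each $E_i$ is projective, so $X' \in \mon(\omega, \cp)$. Dually, in the enough-injectives construction, the cokernel has components $P'_1, P'_0 \in \G(S)$ sitting in sequences $0\rt P_i \rt Q_i \rt P'_i \rt 0$ with $Q_i$ projective; here the third fact is decisive, since $P'_i$ is Gorenstein projective of finite projective dimension and hence projective, placing $X'' = \cok\varphi$ in $\mon(\omega, \cp)$. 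This confirms admissibility. Finally, by \cite[page 46]{chen2012three} an admissible subcategory is automatically a Frobenius category in which an object is projective exactly when it is projective in the ambient category; combined with the explicit description in Theorem \ref{frob}, this shows that the projective-injective objects of $\mon(\omega, \cp)$ are exactly the direct summands of finite direct sums of $(P\st{\id}\rt P)\oplus(Q\st{\omega}\rt Q)$ with $P, Q\in\cp(S)$, which coincide with those of $\mon(\omega, \G)$.
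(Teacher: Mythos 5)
Your proposal is correct and takes essentially the same approach as the paper: the paper's own argument (contained in the paragraph preceding the corollary) is exactly to verify that $\mon(\omega, \cp)$ is an admissible subcategory of the Frobenius category $\mon(\omega, \G)$ --- using that $\cp(S)$ is closed under kernels of epimorphisms and that Gorenstein projective modules of finite projective dimension are projective --- and then to invoke \cite[page 46]{chen2012three} together with Theorem \ref{frob}. Your write-up merely makes explicit the verification the paper leaves implicit; the only blemishes are notational (the labels $X'$ and $X''$ are swapped in your final paragraph relative to your initial display, and the kernel sequences should read $0\rt E_i\rt P_1\oplus P_0\rt P_i\rt 0$ rather than having middle term $P_i\oplus P_i$), neither of which affects the argument.
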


\section{Comparison with the categories of matrix factorizations and Gorenstein projective modules} In this section first, we shall define the notion of matrix factorizations of Gorenstein projective modules, which generalizes the classical notion of matrix factorizations. It is proved that the category of matrix factorizations of Gorenstein projective modules is equivalent to the monomorphism category of Gorenstein projective modules. In particular, it will be observed that their stable categories are triangle equivalent. It is also shown that, there exists a fully faithful triangle functor from the stable category of Gorenstein projective $R$-modules, $\underline{\G}(R)$,  to $\umon(\omega, \G)$. Let us inaugurate this section by recalling the definition of matrix factorizations.

\begin{s}\label{matrix}{\sc Matrix factorizations.} Recall that the category of matrix factorizations of projective modules, $\MF(\omega, \cp)$,  is the category whose objects are ordered pairs ${\xymatrix{(P_1 \ar@<0.6ex>[r]^{\rho_1}& P_0\ar@<0.6ex>[l]^{\rho_0})}}$ in which $P_1, P_0\in\cp(S)$, $\rho_1, \rho_0$ are $S$-homomorphisms, and the compositions $\rho_0\rho_1$ and $\rho_1\rho_0$ are the multiplications by $\omega$. Also, a morphism $\Psi=(\psi_1, \psi_0) :{\xymatrix{(P_1 \ar@<0.6ex>[r]^{\rho_1}& P_0\ar@<0.6ex>[l]^{\rho_0})}}\rt {\xymatrix{(Q_1 \ar@<0.6ex>[r]^{q_1}& Q_0\ar@<0.6ex>[l]^{q_0})}}$ in $\MF(\omega, \cp)$ is a pair of $S$-homomorphisms $\psi_1:P_1\rt Q_1$ and $\psi_0:P_0\rt Q_0$ such that $\psi_1\rho_0=q_0\psi_0$ and $q_1\psi_1=\psi_0\rho_1$. It should be remarked that $\MF(\omega, \cp)$ was originally denoted by $\MF(S, \omega)$ and called the category of matrix factorizations of $\omega$.

It is known that $\MF(\omega, \cp)$ is a Frobenius category, and in particular, its projective-injective objects
are equal to direct summands of finite direct sums of objects of the form ${\xymatrix{(P \ar@<0.6ex>[r]^{\omega}& P\ar@<0.6ex>[l]^{\id})}}\oplus {\xymatrix{(Q \ar@<0.6ex>[r]^{\id}& Q\ar@<0.6ex>[l]^{\omega})}}$, for some $P, Q\in\cp(S)$. So its stable category $\underline{\MF}(\omega, \cp)$ admits triangulated structure, see for example \cite{yoshino1990maximal}.
\end{s}
\begin{remark}\label{matrix}Matrix factorizations were introduced by Eisenbud in \cite{eisenbud1980homological} as a tool for compactly describing the minimal free resolutions of stable maximal Cohen-Macaulay modules over a local hypersurface ring. Since then, they have been used extensively in singularity theory. A famous result of  Buchweitz \cite{buchweitz1987maximal} indicates that, over a regular ring, the stable category of matrix factorizations, which is the same as the homotopy category of matrix factorizations, is triangle equivalent to the singularity category of the corresponding factor ring.
%Recently, they have re-emerged in homological mirror symmetry  \cite{katzarkov2008hodge}, knot theory (e.g. \cite{khovanov2004matrix} and the string theory ascategories of D-branes for Landau-Ginzburg B-models \cite{kapustin2004d, orlov2003triangulated}.

In recent years, matrix factorizations { have gained a wider audience due to their rediscovery in homological mirror symmetry \cite{katzarkov2008hodge},  knot theory \cite{khovanov2004matrix}, and string theory \cite{kapustin2004d, orlov2003triangulated}}.   More precisely, matrix factorizations were proposed by Kontsevich as descriptions of B-branes in Landau-Ginzburg models in topological string theory. As such they appear in the framework of mirror symmetry as for example explained
in \cite{seidel2008homological}. Orlov introduced the singularity category, generalizing Buchweitz’s categorical
construction to a global setup, and established various important results in \cite{orlov2003triangulated, orlov2009derived, orlov2006triangulated}. It should be noted that the category of matrix factorizations and its stable category are called the category of pairs and the category of D-branes of type $B$, in the terminology of Orlov.  Following the suggestion of Kontsevich, matrix factorizations were used by physicists to describe $D$-branes of type $B$ in Landau-Ginzburg models \cite{kapustin2004d, kapustin2003topological}.
\end{remark}

Inspired by the notion of matrix factorizations of projective modules, we define the category of matrix factorizations of Gorenstein projective modules, as follows.

\begin{dfn}
By a {\it matrix factorization of Gorenstein projective modules}, we mean an ordered pair of $S$-homomorphisms ${\xymatrix{(G_1 \ar@<0.6ex>[r]^{g_1}& G_0\ar@<0.6ex>[l]^{g_0})}}$ such that $G_1,G_0\in \G(S)$ and $g_1g_0=\omega.\id_{G_0}$~~~~,~~~$g_0 g_1=\omega.\id_{G_1}$. A morphism $$\Phi=(\phi_1, \phi_0): {\xymatrix{(G_1 \ar@<0.6ex>[r]^{g_1}& G_0\ar@<0.6ex>[l]^{g_0})}}\lra {\xymatrix{(G_1' \ar@<0.6ex>[r]^{g'_1}& G'_0\ar@<0.6ex>[l]^{g'_0})}}$$ between two matrix factorizations of Gorenstein projective modules is a pair of $S$-homomorphisms $\phi_1: G_1\rt G'_1$ and $\phi_0: G_0\rt G'_0$ such that the diagram
$$\xymatrix{G_1\ar[r]^{g_1} \ar[d]_{\varphi_1} &\ G_0 \ar[d]_{\phi_0}\ar[r]^{g_0} & G_1 \ar[d]^{\phi_1}\\ G'_1 \ar[r]_{g'_1} & G'_0 \ar[r]_{g'_0} & G'_1}
$$commutes.
We denote by $\MF(\omega, \G)$ the category of matrix factorizations of Gorenstein projective modules. It is trivial that $\MF (\omega, G)$ is an additive category, with pointwise direct sums, and $\MF(\omega, \cp)$ is a full subcategory of $\MF(\omega, \G)$.\\
Assume that ${\xymatrix{(G_1 \ar@<0.6ex>[r]^{g_1}& G_0\ar@<0.6ex>[l]^{g_0})}}\in\MF(\omega, \G)$. Since $\omega$ is non-zerodivisor on $G_1$ and $G_0$ and $g_1g_0=\omega.\id_{G_0}$~~~~,~~~$g_0 g_1=\omega.\id_{G_1}$, one may easily infer that $g_1$ and $g_0$ are monomorphisms.
\end{dfn}

\begin{example}Let $k$ be a regular local ring and let $x_1,x_2,x_3,x_4$ be independent variables. Set $S:=k[[x_1,x_2,x_3,x_4]]/{(x^2_1x_3)}$. Evidently, $S$ is a  Gorenstein local ring and  $S/{(x^2)}$ is a Gorenstein projective $S$-modules. Now it is easily seen that ${\xymatrix{(S/{(x^2_1)} \ar@<0.6ex>[r]^{x_2}& S/{(x^2_1)}\ar@<0.6ex>[l]^{x_4})}}\in\MF(x_2x_4, \G)$.
\end{example}

The following interesting result reveals that the category of matrix factorizations of Gorenstein projective modules can be identified with the monomorphism category of Gorenstein projective modules.

\begin{theorem}\label{sg}There is an equivalence functor $F:\mon(\omega, \G)\lrt\MF(\omega, \G)$.
\end{theorem}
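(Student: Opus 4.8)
The plan is to construct the functor $F$ explicitly on objects and morphisms, exhibit an inverse functor $H$, and verify that $F$ and $H$ are mutually quasi-inverse. On objects, I would send an object $(G_1\st{g_1}\rt G_0)\in\mon(\omega,\G)$ to the pair ${\xymatrix{(G_1 \ar@<0.6ex>[r]^{g_1}& G_0\ar@<0.6ex>[l]^{g_0})}}$, where $g_0$ is the $S$-homomorphism supplied by Lemma \ref{lem1}, satisfying $g_1g_0=\omega.\id_{G_0}$ and $g_0g_1=\omega.\id_{G_1}$. This lands in $\MF(\omega,\G)$ by the very statement of that lemma. On a morphism $\varphi=(\varphi_1,\varphi_0)$, I would simply set $F(\varphi)=(\varphi_1,\varphi_0)$; the point to check is that the same pair defines a morphism in $\MF(\omega,\G)$, i.e.\ that the outer square involving the splitting maps $g_0$ and $g'_0$ also commutes.

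The main obstacle, and the only step requiring genuine argument, is the well-definedness of $F$ on morphisms, namely that $\varphi_1 g_0=g'_0\varphi_0$ given the hypothesis $\varphi_0 g_1=g'_1\varphi_1$. The key observation is that $g_0$ in Lemma \ref{lem1} is \emph{unique}: since $\omega$ is a non-zerodivisor and $g'_1$ is a monomorphism, any $S$-map $\delta:G_0\rt G'_1$ satisfying $g'_1\delta=\omega.\id$ composed appropriately is determined by its values. Concretely, I would compute $g'_1(\varphi_1 g_0)=\varphi_0 g_1 g_0=\varphi_0(\omega.\id_{G_0})=\omega\varphi_0$ and likewise $g'_1(g'_0\varphi_0)=(g'_1 g'_0)\varphi_0=\omega\varphi_0$, so that $g'_1(\varphi_1 g_0)=g'_1(g'_0\varphi_0)$; since $g'_1$ is a monomorphism this forces $\varphi_1 g_0=g'_0\varphi_0$, which is exactly the commutativity needed for $F(\varphi)$ to be a morphism of matrix factorizations. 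This computation also shows $F$ is automatically faithful, since $F(\varphi)=F(\varphi')$ literally means $\varphi=\varphi'$.

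For the inverse functor $H:\MF(\omega,\G)\lrt\mon(\omega,\G)$, I would send ${\xymatrix{(G_1 \ar@<0.6ex>[r]^{g_1}& G_0\ar@<0.6ex>[l]^{g_0})}}$ to $(G_1\st{g_1}\rt G_0)$, forgetting $g_0$. As noted at the end of the definition of $\MF(\omega,\G)$, the relations $g_1g_0=\omega.\id_{G_0}$ and $g_0g_1=\omega.\id_{G_1}$ together with $\omega$ being a non-zerodivisor force $g_1$ to be a monomorphism; moreover $\omega G_0=g_1g_0(G_0)\subseteq g_1(G_1)$ shows $\omega\cok g_1=0$, so $(G_1\st{g_1}\rt G_0)$ genuinely lies in $\mon(\omega,\G)$. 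On morphisms $H$ keeps the pair $(\phi_1,\phi_0)$, which is visibly a morphism of $\mon(\omega,\G)$ because the left square already commutes by definition of a morphism in $\MF(\omega,\G)$.

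Finally I would verify $HF=\id$ and $FH=\id$. The composite $HF$ is the identity on objects and morphisms by construction. For $FH$, the only subtlety is that applying $F$ to $H({\xymatrix{(G_1 \ar@<0.6ex>[r]^{g_1}& G_0\ar@<0.6ex>[l]^{g_0})}})$ re-manufactures a backward map via Lemma \ref{lem1}, and I must check this recovered map equals the original $g_0$; but this is again immediate from the uniqueness of the map satisfying $g_1(-)=\omega.\id$ and $(-)g_1=\omega.\id$, which I already established using that $g_1$ is a monomorphism and $\omega$ is a non-zerodivisor. Hence $F$ and $H$ are mutually inverse, and $F$ is an equivalence of categories, completing the proof.
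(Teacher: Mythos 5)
Your proposal is correct and follows essentially the same route as the paper: define $F$ on objects via Lemma \ref{lem1}, check well-definedness on morphisms by composing with the monomorphism $g'_1$ to force $g'_0\varphi_0=\varphi_1 g_0$, and conclude the equivalence. The paper leaves the quasi-inverse implicit ("our definition of $F$ reveals that $F$ is an equivalence"), whereas you spell out the forgetful inverse $H$ and the uniqueness argument making $FH=\id$; this is a welcome elaboration, not a different method.
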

\begin{proof}Assume that $(G_1\st{g_1}\rt G_0)\in\mon(\omega, \G)$ is arbitrary. According to Lemma \ref{lem1}, there is a unique morphism $g_0:G_0\rt G_1$ such that ${\xymatrix{(G_1 \ar@<0.6ex>[r]^{g_1}& G_0\ar@<0.6ex>[l]^{g_0})}}\in\MF(\omega, \G)$. Now we set $F(G_1\st{g_1}\rt G_0):={\xymatrix{(G_1 \ar@<0.6ex>[r]^{g_1}& G_0\ar@<0.6ex>[l]^{g_0})}}$. Suppose that $\phi=(\phi_1, \phi_0):(G_1\st{g_1}\rt G_0)\lrt(G'_1\st{g'_1}\rt G'_0)$ is a morphism in $\mon(\omega, \G)$. We would like to show that $\Phi=(\phi_1, \phi_0):{\xymatrix{(G_1 \ar@<0.6ex>[r]^{g_1}& G_0\ar@<0.6ex>[l]^{g_0})}} \lrt {\xymatrix{(G_1' \ar@<0.6ex>[r]^{g'_1}& G'_0\ar@<0.6ex>[l]^{g'_0})}}$
is a morphism in $\MF(\omega, \G)$. To do this, it must be proved that the diagram
\[\xymatrix{ G_1 \ar[r]^{g_1} \ar[d]_{\phi_1} & G_0 \ar[r]^{g_0} \ar[d]_{\phi_0} & G_1 \ar[d]_{\phi_1}
\\ G'_1 \ar[r]^{g'_1} & G'_0 \ar[r]^{g'_0} & G'_1, }\]is commutative.  Since by our hypothesis, the left square is commutative, we only need to show the commutativity of the right one. This indeed follows from the equalities: $g'_1(g'_0\phi_0-\phi_1g_0)=g'_1g'_0\phi_0-g'_1\phi_1g_0=\omega\phi_0-\phi_0g_1g_0=\omega\phi_0-\phi_0\omega=0$, and the fact that $g'_1$ is a monomorphism. So, we define $F(\varphi):= \Phi=(\phi_1, \phi_0)$. Our definition of $F$ reveals that $F$ is an equivalence functor. So the proof is finished.
\end{proof}

As an immediate consequence of Theorem \ref{sg}, we include the result below, which says that $\mon(\omega, \G)$ contains the category of matrix factorizations.
\begin{cor}There is a fully faithful functor $\MF(\omega, \cp)\lrt\mon(\omega, \G)$.
\end{cor}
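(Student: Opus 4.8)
The plan is to combine Theorem~\ref{sg} with the existing fully faithful inclusion at the level of matrix factorizations. Recall that $\MF(\omega,\cp)$ is by definition a full subcategory of $\MF(\omega,\G)$, since every object $\xymatrix{(P_1 \ar@<0.6ex>[r]^{\rho_1}& P_0\ar@<0.6ex>[l]^{\rho_0})}$ with $P_1,P_0\in\cp(S)$ is in particular an ordered pair of maps between Gorenstein projective modules satisfying $\rho_0\rho_1=\omega.\id$ and $\rho_1\rho_0=\omega.\id$ (recall $\cp(S)\subseteq\G(S)$). Thus the inclusion $J:\MF(\omega,\cp)\hookrightarrow\MF(\omega,\G)$ is fully faithful. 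By Theorem~\ref{sg}, the functor $F:\mon(\omega,\G)\lrt\MF(\omega,\G)$ is an equivalence of categories, so it admits a quasi-inverse $F^{-1}:\MF(\omega,\G)\lrt\mon(\omega,\G)$.

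First I would form the composite $F^{-1}\circ J:\MF(\omega,\cp)\lrt\mon(\omega,\G)$. Since $J$ is fully faithful and $F^{-1}$ is an equivalence (hence fully faithful), the composite of a fully faithful functor with an equivalence is again fully faithful; this is the functor we want. Concretely, an object $\xymatrix{(P_1 \ar@<0.6ex>[r]^{\rho_1}& P_0\ar@<0.6ex>[l]^{\rho_0})}$ in $\MF(\omega,\cp)$ is sent to the monomorphism $(P_1\st{\rho_1}\rt P_0)$ in $\mon(\omega,\G)$, noting that $\rho_1$ is a monomorphism (as observed in the definition of $\MF(\omega,\G)$, because $\omega$ is a non-zerodivisor on $P_1$ and $\rho_0\rho_1=\omega.\id_{P_1}$), and $\cok\rho_1$ is annihilated by $\omega$ since $\omega P_0=\rho_1\rho_0(P_0)\subseteq\im\rho_1$. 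This shows the target $(P_1\st{\rho_1}\rt P_0)$ genuinely lies in $\mon(\omega,\G)$, not merely in $\mon(S)$.

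The verification that the composite is fully faithful is the substantive content, and it is essentially formal given the two ingredients: full faithfulness is preserved under composition, and every equivalence of categories is fully faithful. I would state this explicitly so that the reader sees no essential obstacle remains. There is no hard step here; the only point requiring a moment of care is making sure the essential image lands inside $\mon(\omega,\cp)$ (indeed $P_1,P_0\in\cp(S)$ by construction), which confirms that one could even regard the functor as landing in the subcategory $\mon(\omega,\cp)$, consistent with the corollary just before this section. I would keep the argument to a few lines, citing Theorem~\ref{sg} for the equivalence and the definitional inclusion $\MF(\omega,\cp)\subseteq\MF(\omega,\G)$ for full faithfulness of $J$, and then concluding by composition.
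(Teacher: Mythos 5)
Your proposal is correct and is exactly the argument the paper intends: the corollary is stated there as an immediate consequence of Theorem~\ref{sg}, obtained by composing the full inclusion $\MF(\omega,\cp)\subseteq\MF(\omega,\G)$ with a quasi-inverse of the equivalence $F$. Your additional verification that the image object $(P_1\st{\rho_1}\rt P_0)$ lies in $\mon(\omega,\cp)\subseteq\mon(\omega,\G)$ is a nice explicit touch but matches what the paper's definitions already guarantee.
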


\begin{remark}\label{11}Assume that
$0\lrt(G'_1\st{g'_1}\rt G'_0)\st{\phi=(\phi_1,\phi)}\lrt (G_1\st{g_1}\rt G_0)\st{\psi=(\psi_1,\psi_0)}\lrt (G''_1\st{g''_1}\rt G''_0)\lrt 0$ is a short exact sequence  in $\mon(\omega, \G)$. As we have seen in the proof of Theorem \ref{sg}, there is a commutative diagram of $S$-modules

$$\xymatrix{0\ar[r]& G'_1\ar[r]^{\phi_1} \ar[d]_{g'_1} & G_1 \ar[d]_{g_1}\ar[r]^{\psi_1}& G''_1 \ar[d]_{g''_1}\ar[r]& 0\\ 0\ar[r]& G'_0 \ar[r]^{\phi_0}\ar[d]_{g'_0} & G_0 \ar[d]_{g_0}\ar[r]^{\psi_0} & G''_0\ar[d]^{g''_0}\ar[r]& 0 \\ 0\ar[r] & G'_1 \ar[r]^{\phi_1} & G_1\ar[r]^{\psi_1}& G''_1\ar[r]& 0,}$$such that the composition of morphisms in each column is equal to multiplication by $\omega$. Moreover, by our hypothesis,
rows are exact. This means that
$$0\lrt {\xymatrix{(G'_1 \ar@<0.6ex>[r]^{g'_1}& G'_0\ar@<0.6ex>[l]^{g'_0})}}\st{\Phi=(\phi_1,\phi_0)}\lra {\xymatrix{(G_1 \ar@<0.6ex>[r]^{g_1}& G_0)\ar@<0.6ex>[l]^{g_0}}}\st{\Psi=(\psi_1,\psi_0)}\lra {\xymatrix{(G''_1 \ar@<0.6ex>[r]^{g''_1}& G''_0)\ar@<0.6ex>[l]^{g''_0}}}\lra 0,$$ is a short exact sequence in $\MF(\omega, \G)$.
Moreover, by the definition of the functor $F$, we have $F(P\st{\omega}\rt P)={\xymatrix{(P \ar@<0.6ex>[r]^{\omega}& P\ar@<0.6ex>[l]^{\id})}}$ and  $F(P\st{\id}\rt P)={\xymatrix{(P \ar@<0.6ex>[r]^{\id}& P\ar@<0.6ex>[l]^{\omega})}}$.
\end{remark}

Assume that $\xi$ is the class of all short exact sequences in $\MF(\omega, \G)$. Combining Theorems \ref{frob} and \ref{sg} and Remark \ref{11}, yields the result below.

\begin{prop}\label{gfrob1} $(\MF(\omega, \G), \xi)$ is an exact category. Moreover,  $\MF(\omega, \G)$  is a Frobenius category and its projective-injective objects are equal to direct summands of finite direct sums of objects of the form ${\xymatrix{(P \ar@<0.6ex>[r]^{\omega}& P\ar@<0.6ex>[l]^{\id})}}\oplus {\xymatrix{(Q \ar@<0.6ex>[r]^{\id}& Q\ar@<0.6ex>[l]^{\omega})}}$, for some projective $S$-modules $P, Q$.
\end{prop}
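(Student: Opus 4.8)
The plan is to transport all the relevant exact-categorical structure from $\mon(\omega, \G)$ to $\MF(\omega, \G)$ along the equivalence $F$ of Theorem \ref{sg}, thereby avoiding any fresh verification of the Frobenius axioms. The key observation is that $F$ is not merely an equivalence of additive categories but, once we declare $\xi$ to be the class of all short exact sequences in $\MF(\omega, \G)$, an \emph{exact} equivalence: Remark \ref{11} shows that $F$ carries each conflation of $\mon(\omega, \G)$ to a short exact sequence in $\MF(\omega, \G)$, and conversely any short exact sequence in $\MF(\omega, \G)$ is, after forgetting the backward maps $g_0$, a short exact sequence with terms in $\mon(\omega, \G)$, hence a conflation there. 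Thus $F$ induces a bijection between the conflations of $\mon(\omega, \G)$ and the members of $\xi$.

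First I would record that $(\MF(\omega, \G), \xi)$ is an exact category. Since $\mon(\omega, \G)$ is an exact category by Theorem \ref{sexact} and $F$ is an additive equivalence matching conflations with members of $\xi$, every one of the axioms $(E0)$–$(E2^{\op})$ of Definition \ref{exact} transfers verbatim: identity inflations and deflations are preserved by $F$, composites of admissible monics (resp. epics) go to composites of admissible monics (resp. epics), and push-outs along admissible monics (resp. pull-backs along admissible epics) are preserved because $F$ is an equivalence and so preserves all finite limits and colimits that exist. Hence $\xi$ satisfies the exact-category axioms by pullback along $F^{-1}$.

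Next I would identify the projective-injective objects. By Theorem \ref{frob}, the projective-injective objects of $\mon(\omega, \G)$ are precisely the direct summands of finite direct sums of $(P\st{\id}\rt P)\oplus(Q\st{\omega}\rt Q)$ with $P,Q\in\cp(S)$. Because $F$ is an exact equivalence, it carries projective objects to projective objects and injective objects to injective objects (a projective object is characterized by the splitting of every deflation onto it, a property visibly preserved by an exact equivalence, and dually for injectives). Consequently the projective-injective objects of $(\MF(\omega, \G), \xi)$ are exactly the images under $F$ of the projective-injectives of $\mon(\omega, \G)$. Invoking the explicit formulas of Remark \ref{11}, namely $F(P\st{\id}\rt P)={\xymatrix{(P \ar@<0.6ex>[r]^{\id}& P\ar@<0.6ex>[l]^{\omega})}}$ and $F(Q\st{\omega}\rt Q)={\xymatrix{(Q \ar@<0.6ex>[r]^{\omega}& Q\ar@<0.6ex>[l]^{\id})}}$, these images are precisely the direct summands of finite direct sums of ${\xymatrix{(P \ar@<0.6ex>[r]^{\omega}& P\ar@<0.6ex>[l]^{\id})}}\oplus {\xymatrix{(Q \ar@<0.6ex>[r]^{\id}& Q\ar@<0.6ex>[l]^{\omega})}}$, as claimed.

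Finally, the Frobenius property follows: $\mon(\omega, \G)$ has enough projectives and enough injectives with the two classes coinciding (Theorem \ref{frob}), and since $F$ is an exact equivalence it transports "enough projectives'', "enough injectives'', and the coincidence of the two classes to $(\MF(\omega, \G), \xi)$. I anticipate that the only genuinely delicate point is the verification that $\xi$ really is the image of the conflation class under $F$ — that is, that every short exact sequence in $\MF(\omega, \G)$ arises from a conflation of $\mon(\omega, \G)$; but this is settled by Remark \ref{11} together with the fact that the forgetful assignment dropping the backward map $g_0$ is faithful and detects exactness, so the main obstacle is essentially dispatched before the argument begins. The remaining steps are then purely formal transport of structure along an exact equivalence.
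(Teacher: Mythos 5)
Your proposal is correct and takes essentially the same approach as the paper: the paper's entire proof consists of the remark that Theorems \ref{frob} and \ref{sg} together with Remark \ref{11} transport the exact and Frobenius structure of $\mon(\omega, \G)$ across the equivalence $F$, which is exactly your transport-of-structure argument. The only detail you add beyond the paper is the (correct, and worth making explicit) converse observation that every short exact sequence in $\MF(\omega, \G)$ arises from a conflation of $\mon(\omega, \G)$ by forgetting the backward maps, so that $\xi$ really is the image of the conflation class under $F$.
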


According to Proposition \ref{gfrob1}, $\MF(\omega, \G)$ is a Frobenius category, and so, its stable category $\underline{\MF}(\omega, \G)$ is triangulated.

\begin{theorem}\label{22}There is a triangle equivalence functor ${\umon(\omega, \G)}\lrt\underline{\MF}(\omega, \G)$.
\end{theorem}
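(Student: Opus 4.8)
The plan is to show that the equivalence $F:\mon(\omega, \G)\lrt\MF(\omega, \G)$ of Theorem \ref{sg} is an \emph{exact} equivalence between Frobenius categories which carries projective-injective objects to projective-injective objects, and to conclude that it therefore descends to a triangle equivalence of the associated stable categories.

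First I would record that $F$ is an exact equivalence. By its construction, $F$ leaves the modules $G_1,G_0$ and the forward morphisms unchanged, adjoining only the uniquely determined backward map $g_0$ supplied by Lemma \ref{lem1}; consequently a sequence in $\mon(\omega, \G)$ is a conflation exactly when its image under $F$ is a short exact sequence in $\MF(\omega, \G)$. The preservation direction is precisely the content of Remark \ref{11}, and since the quasi-inverse of $F$ acts in the same way on the underlying module data, the converse holds too. Hence both $F$ and its quasi-inverse are exact.

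Next I would verify that $F$ identifies projective-injective objects on the two sides. By Theorem \ref{frob} and Proposition \ref{gfrob1}, these are, in each case, the direct summands of finite direct sums of the distinguished objects $(P\st{\id}\rt P)\oplus(Q\st{\omega}\rt Q)$, respectively of the corresponding matrix factorizations. By Remark \ref{11} we have that $F(P\st{\omega}\rt P)$ and $F(P\st{\id}\rt P)$ are exactly the generating projective-injectives on the $\MF(\omega, \G)$ side; since $F$ is additive and an equivalence, it carries direct summands to direct summands and thus restricts to an equivalence between the full subcategories of projective-injective objects.

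With these two facts in hand, I would invoke the standard principle that an exact equivalence between Frobenius categories preserving projective-injective objects induces a triangle equivalence of stable categories (see \cite{happel1988triangulated, keller1990chain}). Concretely, since $F$ sends morphisms factoring through projective-injectives to morphisms of the same type, it descends to an additive functor $\underline{F}:\umon(\omega, \G)\lrt\underline{\MF}(\omega, \G)$, which is an equivalence because $F$ is. To see that $\underline{F}$ is triangulated, recall that the suspension $\si X$ is defined by a conflation $X\rt I\rt \si X$ with $I$ projective-injective; applying the exact functor $F$ produces a conflation $F(X)\rt F(I)\rt F(\si X)$ with $F(I)$ projective-injective, which yields a natural isomorphism $\underline{F}\,\si\cong\si\,\underline{F}$. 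As every distinguished triangle in each stable category arises from a conflation and $F$ preserves conflations, $\underline{F}$ sends distinguished triangles to distinguished triangles, so it is a triangle equivalence. The step requiring the most care is the coherent construction of the natural isomorphism $\underline{F}\,\si\cong\si\,\underline{F}$ witnessing compatibility with the suspension; once the exactness of $F$ and its preservation of projective-injectives are established, the remaining verifications are formal.
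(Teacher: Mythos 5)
Your proposal is correct and follows essentially the same route as the paper: invoke the equivalence $F$ of Theorem \ref{sg}, use Remark \ref{11} together with Theorem \ref{frob} and Proposition \ref{gfrob1} to see that $F$ preserves conflations and projective-injective objects, and then descend to the stable categories via the standard Happel--Keller principle. The only difference is that you spell out details the paper leaves implicit (exactness of the quasi-inverse and the natural isomorphism $\underline{F}\,\si\cong\si\,\underline{F}$), which is a welcome refinement rather than a different argument.
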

\begin{proof}According to Theorem \ref{sg},  the functor $F:\mon(\omega, \G)\lrt\MF(\omega, \G)$ sending each object $(G_1\st{g_1}\rt G_0)$ to the pair ${\xymatrix{(G_1 \ar@<0.6ex>[r]^{g_1}& G_0\ar@<0.6ex>[l]^{g_0})}}$, is an equivalence of categories. Moreover, as indicated in Remark \ref{11}, $F$ carries projective objects to projectives. So there is an induced functor $\underline{F}:\umon(\omega, \G)\lrt\underline{\MF}(\omega, \G)$, see \cite[ 2.8, page 22]{happel1988triangulated}. Clearly, $\underline{F}$ is an equivalence functor. So, in order to complete the proof, we need to show that $\underline{F}$ is a triangle functor. By Remark \ref{11}, $F$ sends each short exact sequence in $\mon(\omega, \G)$ to a short exact sequence in $\MF(\omega, \G)$. Moreover, $F$ sends projective objects in $\mon(\omega, \G)$ to projective objects in the category $\MF(\omega, \G)$, see Theorem \ref{frob} and Proposition \ref{gfrob1}. These facts enable us to infer that $\underline{F}$ is a triangle functor. So the proof is finished.
\end{proof}

By replacing  Gorenstein projective modules with projectives in the proof of Theorem \ref{22}, one may obtain the following result which recovers \cite[Corollary 3.3]{stablebahlekeh}.
\begin{cor}\label{33}There is a triangle equivalence functor $F':{\umon(\omega, \cp)}\lrt\underline{\MF}(\omega, \cp)$.
\end{cor}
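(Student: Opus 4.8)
The plan is to mimic the proof of Theorem \ref{22} verbatim, after first checking that the equivalence $F$ of Theorem \ref{sg} restricts to the projective subcategories. I would begin by observing that $F$ carries $\mon(\omega, \cp)$ into $\MF(\omega, \cp)$: given $(P_1\st{g_1}\rt P_0)\in\mon(\omega, \cp)$ with $P_1, P_0\in\cp(S)$, the companion morphism $g_0:P_0\rt P_1$ produced in Lemma \ref{lem1} is again a map between projective modules, so the pair ${\xymatrix{(P_1 \ar@<0.6ex>[r]^{g_1}& P_0\ar@<0.6ex>[l]^{g_0})}}$ lies in $\MF(\omega, \cp)$. Conversely, every object of $\MF(\omega, \cp)$ arises in this way from its forward map, so the restricted functor $F':\mon(\omega, \cp)\rt\MF(\omega, \cp)$ is essentially surjective; since $\mon(\omega, \cp)$ and $\MF(\omega, \cp)$ are full subcategories of $\mon(\omega, \G)$ and $\MF(\omega, \G)$, full faithfulness of $F'$ is inherited from that of $F$. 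Hence $F'$ is an equivalence of additive categories.

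Next I would transport the exact and Frobenius structures. By the corollary following Theorem \ref{sexact} together with Corollary \ref{s1}, $\mon(\omega, \cp)$ is an extension-closed Frobenius subcategory of $\mon(\omega, \G)$ whose projective-injective objects are the direct summands of finite direct sums of $(P\st{\id}\rt P)\oplus(Q\st{\omega}\rt Q)$ with $P, Q\in\cp(S)$. Exactly as in Remark \ref{11}, $F'$ carries a short exact sequence in $\mon(\omega, \cp)$ to a short exact sequence in $\MF(\omega, \cp)$, and by the computation recorded there it sends the distinguished projective-injective objects to ${\xymatrix{(P \ar@<0.6ex>[r]^{\id}& P\ar@<0.6ex>[l]^{\omega})}}\oplus{\xymatrix{(Q \ar@<0.6ex>[r]^{\omega}& Q\ar@<0.6ex>[l]^{\id})}}$, which are precisely the projective-injective objects of $\MF(\omega, \cp)$ recalled in \ref{matrix}. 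Thus $F'$ preserves conflations and projective-injectives in both directions.

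Finally, since $F'$ is an exact equivalence preserving projective-injective objects, it descends to the stable categories and yields an equivalence $\underline{F'}:\umon(\omega, \cp)\rt\underline{\MF}(\omega, \cp)$ in the sense of \cite[2.8, page 22]{happel1988triangulated}; the same compatibility with short exact sequences that made $\underline{F}$ a triangle functor in Theorem \ref{22} shows that $\underline{F'}$ commutes with the shift functors and carries triangles to triangles, so $\underline{F'}$ is a triangle equivalence. I expect no genuine obstacle here: the only points requiring care are verifying that the matrix-factorization construction of Lemma \ref{lem1} does not leave the projective world and that the two descriptions of projective-injective objects match under $F'$, both of which are immediate from Corollary \ref{s1} and the description in \ref{matrix}.
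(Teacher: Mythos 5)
Your proposal is correct and takes essentially the same route as the paper: the paper's proof of this corollary is simply to rerun the proof of Theorem \ref{22} with projective modules in place of Gorenstein projective ones, which is exactly what you do after verifying that $F$ restricts to the projective subcategories. The details you spell out --- that the companion map from Lemma \ref{lem1} stays inside $\cp(S)$, that conflations and the distinguished projective-injective objects match up via Remark \ref{11} and Corollary \ref{s1} --- are precisely the verifications the paper leaves implicit.
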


\begin{cor}\label{s2}$\MF(\omega, \cp)$ is a Frobenius subcategory of $\MF(\omega, \G)$, with the same projective objects. In particular, there is a fully faithful triangle functor $\underline{\MF}(\omega, \cp)\lrt\underline{\MF}(\omega, \G)$.
\end{cor}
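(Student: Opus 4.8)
The plan is to transport the entire statement through the equivalence $F\colon\mon(\omega,\G)\lrt\MF(\omega,\G)$ of Theorem \ref{sg}, turning the already established structural result for the pair $\mon(\omega,\cp)\subseteq\mon(\omega,\G)$ (Corollary \ref{s1}) into the asserted one for $\MF(\omega,\cp)\subseteq\MF(\omega,\G)$.

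First I would verify that $F$ restricts to an equivalence $\mon(\omega,\cp)\lrt\MF(\omega,\cp)$. For an object ${\xymatrix{(P_1 \ar@<0.6ex>[r]^{\rho_1}& P_0\ar@<0.6ex>[l]^{\rho_0})}}$ of $\MF(\omega,\cp)$ the relations $\rho_0\rho_1=\omega.\id_{P_1}$ and $\rho_1\rho_0=\omega.\id_{P_0}$ force $\rho_1$ to be a monomorphism with $\omega P_0\subseteq\im\rho_1$, so that $(P_1\st{\rho_1}\rt P_0)$ lies in $\mon(\omega,\cp)$ and is sent by $F$ to the given pair; conversely Remark \ref{11} records $F(P\st{\id}\rt P)$ and $F(P\st{\omega}\rt P)$ explicitly, confirming that $F$ maps $\mon(\omega,\cp)$ into $\MF(\omega,\cp)$. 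Thus $F$ identifies the two projective sub-versions.

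Next I would note that $F$ is an \emph{exact} equivalence for the relevant exact structures: by Remark \ref{11} it carries conflations of $\mon(\omega,\G)$ to members of the class $\xi$, and being an equivalence it also reflects them, while Theorem \ref{frob} and Proposition \ref{gfrob1} show that $F$ matches the projective-injective objects of the two Frobenius categories. Hence the admissibility data witnessing Corollary \ref{s1}---for each object of $\mon(\omega,\cp)$ a conflation into and a conflation out of a projective object of the ambient $\mon(\omega,\G)$---transport verbatim along $F$. This simultaneously yields that $\MF(\omega,\cp)$ is extension-closed in $\MF(\omega,\G)$, that it is itself Frobenius, and that an object of $\MF(\omega,\cp)$ is projective there exactly when it is projective in $\MF(\omega,\G)$; in other words $\MF(\omega,\cp)$ is a Frobenius subcategory with the same projective objects.

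For the final clause I would apply the general fact (\cite[page 46]{chen2012three}) that the inclusion of an admissible subcategory of a Frobenius category descends to a fully faithful triangle functor on stable categories, here $\underline{\MF}(\omega,\cp)\lrt\underline{\MF}(\omega,\G)$; equivalently, one can combine the triangle equivalences of Corollary \ref{33} and Theorem \ref{22} with the fully faithful triangle functor $\umon(\omega,\cp)\lrt\umon(\omega,\G)$ supplied by Corollary \ref{s1}, and read off the induced functor on the matrix-factorization side. The one point deserving real care---and the step I would write out in most detail---is the compatibility of the two exact structures under $F$, namely that $F$ neither creates nor destroys conflations, since it is exactly this that makes ``admissible subcategory'' a transportable notion; once that is settled, the rest is formal.
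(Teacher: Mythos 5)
Your proposal is correct, but it follows a genuinely different route from the paper's. The paper does not invoke Theorem \ref{sg} at all here: it argues directly on the matrix-factorization side, mirroring the proof of Corollary \ref{s1} --- since $\cp(S)$ is closed under extensions, $\MF(\omega,\cp)$ is extension-closed in $\MF(\omega,\G)$; since $\MF(\omega,\G)$ is Frobenius (Proposition \ref{gfrob1}) and every Gorenstein projective module of finite projective dimension is projective, $\MF(\omega,\cp)$ is an admissible subcategory; Chen's result \cite[page 46]{chen2012three} then yields both the Frobenius statement with the same projectives and the fully faithful triangle functor on stable categories. You instead transport Corollary \ref{s1} through the equivalence $F$ of Theorem \ref{sg}, and your identification of the delicate point is exactly right: the one claim that needs an argument is that $F$ \emph{reflects} conflations, which does not follow merely from $F$ being an equivalence that preserves them (an equivalence onto a category equipped with a larger exact structure preserves conflations without reflecting them). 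It does hold here, because $F$ is the identity on underlying modules and morphisms and the conflations on both sides are precisely the componentwise short exact sequences, so exactness of $F(\eta)$ literally is exactness of $\eta$. Granting that, your transport is sound: reflection gives extension-closedness of $\MF(\omega,\cp)$, preservation together with the matching of projectives (Remark \ref{11}, Theorem \ref{frob}, Proposition \ref{gfrob1}) gives the admissibility conflations, and Chen's result (or, for the last clause alone, composing Corollary \ref{33}, the stable functor behind Corollary \ref{s1}, and Theorem \ref{22}) finishes the proof. What your route buys is economy: Corollary \ref{s2} becomes Corollary \ref{s1} read through $F$, with no module-theoretic facts about $\cp(S)$ re-verified. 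What the paper's route buys is independence from Theorem \ref{sg} and brevity, since the direct verification of admissibility is a one-line appeal to closure properties of $\cp(S)$.
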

\begin{proof}As $\cp(S)$ is closed under extensions, one may easily see that $\MF(\omega, \cp)$ is an extension-closed subcategory of $\MF(\omega, \G)$. In view of Proposition \ref{gfrob1}, $\MF(\omega, \G)$ is a Frobenius category. On the other hand, every Gorenstein projective module of finite projective dimension is projective. These facts yield that $\MF(\omega, \cp)$ is an admissible subcategory of $\MF(\omega, \G)$, and so, $\MF(\omega, \cp)$ will be a Frobenius category such that its projective objects are the same as $\MF(\omega, \G)$. This, in turn, implies that the inclusion functor $i:\MF(\omega, \cp)\lrt\MF(\omega, \G)$ induces a fully faithful triangle functor $\underline{i}:\underline{\MF}(\omega, \cp)\lrt\underline{\MF}(\omega, \G)$, as needed.
\end{proof}

In the remainder of this section, we show that the stable category of Gorenstein projective $R$-modules, $\underline{\G}(R)$, can be viewed as a triangulated subcategory of $\umon(\omega, \G)$. First we state an auxiliary lemma, which indicates that any module in $\G(R)$ admits a Gorenstein projective precover over $S$, with projective kernel.

Recall that an $S$-homomorphism $\varphi:G\rt M$ is a Gorenstein precover of $M$, if $G\in\G(S)$,  and for any object $G'\in\G(S)$, the induced map $\Hom_S(G', G)\lrt\Hom_S(G', M)$ is surjective. One should note that since the class of Gorenstein projective modules contains projectives, every Gorenstein projective precover will be surjective.
\begin{lemma}\label{precover}Let $M$ be an arbitrary object of $\G(R)$. Then there is a short exact sequence of $S$-modules $0\rt P\rt G\st{\varphi}\rt M\rt 0$, where $G\in\G(S)$ and $P\in\cp(S)$. In particular, $\varphi: G\rt M$  is a Gorenstein projective precover of $M$.
\end{lemma}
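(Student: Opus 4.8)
The plan is to build $G$ as a preimage inside a free $S$-module. Since $M\in\G(R)$, the module $M$ is a syzygy of a totally acyclic complex of free $R$-modules, so in particular there is a short exact sequence of $R$-modules $0\rt M\rt F\rt M'\rt 0$ in which $F$ is a free $R$-module and the cosyzygy $M'$ again lies in $\G(R)$. I would then choose a free $S$-module $\tilde F$ together with the canonical surjection $\pi:\tilde F\rt F=\tilde F/\omega\tilde F$; as $\omega$ is a non-zerodivisor, $\Ker\pi=\omega\tilde F\cong\tilde F$ is free over $S$. Setting $G:=\pi^{-1}(M)\subseteq\tilde F$ and $\varphi:=\pi|_G:G\rt M$ produces at once a short exact sequence of $S$-modules $0\rt \omega\tilde F\rt G\st{\varphi}\rt M\rt 0$ with $P:=\omega\tilde F\in\cp(S)$, and, upon reading $\tilde F/G\cong F/M=M'$, a second short exact sequence $0\rt G\rt\tilde F\rt M'\rt 0$. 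Since $\tilde F$ is free, the latter exhibits $G$ as a first $S$-syzygy of $M'$.

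So everything reduces to the assertion that if $N\in\G(R)$ then its first $S$-syzygy $\Omega_S N$ belongs to $\G(S)$ (which I would apply with $N=M'$). First I would dispose of the cohomological half of the Gorenstein projective condition. Applying $\Ext^\ast_S(-,S)$ to $0\rt\Omega_S N\rt S^k\rt N\rt 0$ and dimension shifting gives $\Ext^i_S(\Omega_S N,S)\cong\Ext^{i+1}_S(N,S)$ for $i\geq 1$. Because $\omega$ is a non-zerodivisor on $S$ and $\omega N=0$, Rees's change-of-rings isomorphism yields $\Ext^{i+1}_S(N,S)\cong\Ext^i_R(N,R)$, and the right-hand side vanishes for $i\geq 1$ since $N\in\G(R)$. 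Hence $\Ext^i_S(\Omega_S N,S)=0$ for all $i\geq 1$.

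The main obstacle is the remaining, non-cohomological part: the vanishing of $\Ext^{\geq1}_S((\Omega_S N)^\ast,S)$ together with reflexivity of $\Omega_S N$, that is, its total reflexivity. I expect to handle this by upgrading the computation above to a two-sided statement, either by invoking the change-of-rings equality $\mathsf{G}\text{-}\dim_S N=\mathsf{G}\text{-}\dim_R N+1$ for the non-zerodivisor $\omega$ (so that $\mathsf{G}\text{-}\dim_R N=0$ forces $\mathsf{G}\text{-}\dim_S N=1$, whence $\mathsf{G}\text{-}\dim_S\Omega_S N=0$, i.e. $\Omega_S N\in\G(S)$), or, more self-containedly, by splicing a complete resolution of $N$ over $R$ with the multiplication-by-$\omega$ maps to manufacture a totally acyclic complex of free $S$-modules having $\Omega_S N$ as a syzygy. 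Either route demands the careful bookkeeping that I regard as the technical heart of the lemma.

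Finally, granting $G\in\G(S)$, the precover claim is immediate. For any $G'\in\G(S)$ one has $\Ext^1_S(G',P)=0$, since $P$ is projective and $\Ext^{\geq1}_S(G',S)=0$; applying $\Hom_S(G',-)$ to $0\rt P\rt G\st{\varphi}\rt M\rt 0$ therefore yields the surjectivity of $\Hom_S(G',G)\rt\Hom_S(G',M)$, so that $\varphi$ is a Gorenstein projective precover of $M$.
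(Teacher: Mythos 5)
Your proposal is correct, but the construction is genuinely different from the paper's. The paper applies the change-of-rings theorem for Gorenstein dimension to $M$ itself (``its Gorenstein projective dimension over $S$ is one''), obtaining first a sequence $0\rt G\rt P\rt M\rt 0$ with the projective $P$ in the \emph{middle} and $G\in\G(S)$ as kernel, and then rotates it: $G$ is embedded into a projective $Q$ with cokernel $G'\in\G(S)$, the map $G\rt P$ is extended to $\theta:Q\rt P$ (using $\Ext^1_S(G',P)=0$), $\theta$ is arranged to be surjective so that $\Ker\theta$ is projective, and the snake lemma yields the desired sequence $0\rt\Ker\theta\rt G'\st{\varphi}\rt M\rt 0$. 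You avoid the rotation entirely: pulling the $R$-cosyzygy sequence $0\rt M\rt F\rt M'\rt 0$ back along the free cover $\tilde F\rt F$ produces the projective (indeed free) kernel $\omega\tilde F$ automatically, and the only thing left to check is that $G=\pi^{-1}(M)=\Omega_S M'$ lies in $\G(S)$, which is the change-of-rings theorem applied to the cosyzygy $M'$ instead of to $M$. So the two arguments rest on exactly the same classical input (Auslander--Bridger), just applied to different modules; yours buys a more economical and explicit construction, while the paper's works with $M$ directly at the cost of the lifting argument. One remark: the hedging in your third paragraph is unnecessary. The equality $\mathsf{G}\text{-}\dim_S N=\mathsf{G}\text{-}\dim_R N+1$ for a finitely generated $R$-module $N$ (with $\omega$ a non-zerodivisor on $S$) is the classical Auslander--Bridger change-of-rings theorem, and it is precisely the fact that the paper itself invokes, without proof or citation, in the opening sentence of its own argument; you may quote it with the same confidence, at which point your partial verification via Rees's isomorphism becomes a consistency check rather than a needed step. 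Your closing precover argument via $\Ext^1_S(G',P)=0$ coincides with what the paper leaves implicit.
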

\begin{proof}Since $M\in\G(R)$, its Gorenstein projective dimension over $S$ is one. So, one may take a short exact sequence of $S$-modules $0\rt G\rt P\rt M\rt 0$, with $G\in\G(S)$ and $P\in\cp(S)$. Consider a short exact sequence of $S$-modules $0\rt G\rt Q\rt G'\rt 0$, where $Q\in\cp(S)$ and $G'\in\G(S)$. Hence, applying the functor $\Hom_S(-, P)$ to the latter sequence, gives us a morphism $\theta: Q\rt P$, and so $\varphi: G'\rt M$, making the following commutative diagram of $S$-modules: \[\xymatrix{ 0 \ar[r] & G \ar[r] \ar@{=}[d] & Q \ar[r] \ar[d]_{\theta} & G' \ar[r] \ar[d]_{\varphi} & 0\\  0 \ar[r] & G \ar[r] & P \ar[r] & M \ar[r] & 0.&}\]Without loss of generality, we may assume that $\theta$ is an epimorphism. Thus, $L:=\Ker\theta$ will be a projective $S$-module. Consequently, we get the short exact sequence of $S$-modules, $0\rt L\rt G'\st{\varphi}\rt M\rt 0$. This, in turn, implies that $\varphi: G'\rt M$ is a Gorenstein projective precover of $M$ over $S$. So the proof is completed.
\end{proof}

\begin{theorem}\label{main} There is a fully faithful triangle functor $\underline{T}:\underline{\G}(R)\lrt\umon(\omega, \G)$.
\end{theorem}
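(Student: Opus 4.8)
The plan is to construct $\underline{T}$ from the resolutions furnished by Lemma \ref{precover} and to read off all of its properties through the cokernel functor. For each $M\in\G(R)$ I would fix, once and for all, a short exact sequence $0\rt P_M\st{f_M}\rt G_M\st{\varphi_M}\rt M\rt 0$ as in Lemma \ref{precover}, with $P_M\in\cp(S)$, $G_M\in\G(S)$ and $\varphi_M$ a Gorenstein projective precover, and set $T(M):=(P_M\st{f_M}\rt G_M)$. Since $P_M\in\cp(S)\subseteq\G(S)$, $f_M$ is monic and $\cok f_M=M$ is an $R$-module, this lies in $\mon(\omega,\G)$. Given $\alpha:M\rt N$, the precover property of $\varphi_N$ lets me lift $\alpha\varphi_M$ to some $\beta:G_M\rt G_N$ with $\varphi_N\beta=\alpha\varphi_M$; as $\varphi_N\beta f_M=\alpha\varphi_M f_M=0$, the map $\beta f_M$ factors through $f_N$, yielding a unique $\beta':P_M\rt P_N$ with $f_N\beta'=\beta f_M$, and I set $T(\alpha):=(\beta',\beta)$.

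Next I would show the induced $\underline{T}$ is a well-defined, fully faithful functor. Two lifts $\beta,\tilde\beta$ of $\alpha$ differ by $f_N\delta$ for some $\delta:G_M\rt P_N$, and then the two candidates for $T(\alpha)$ differ by $(\delta f_M,\,f_N\delta)$, which factors through the object $(P_N\st{\id}\rt P_N)$; the latter is projective–injective by Lemmas \ref{lem3} and \ref{lem4} (here it is essential that $P_N$ be \emph{projective}), so the difference dies in $\umon(\omega,\G)$. The same computation gives independence of the chosen resolution, and functoriality is then routine. For full faithfulness I use the cokernel functor $\cok:\mon(\omega,\G)\rt\md R$, for which $\cok\circ T=\id$ on objects and $\cok(T(\alpha))=\alpha$ on morphisms. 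If $\underline{T}(\alpha)=0$, then $T(\alpha)$ factors through a projective–injective $I$, whence $\alpha=\cok(T(\alpha))$ factors through $\cok I$; as $I$ is a summand of sums of $(P\st{\id}\rt P)$ and $(Q\st{\omega}\rt Q)$, its cokernel is a projective $R$-module and $\alpha=0$ in $\underline{\G}(R)$, giving faithfulness. For fullness, a morphism $T(M)\rt T(N)$ in $\umon(\omega,\G)$ is represented by some $(h_1,h_0)$ with $f_N h_1=h_0 f_M$; its cokernel $\overline{h_0}:M\rt N$ is $R$-linear, and since $h_0$ is itself a valid lift of $\overline{h_0}$ we get $\underline{T}(\overline{h_0})=[(h_1,h_0)]$.

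The genuinely hard point is that $\underline{T}$ is a triangle functor. For compatibility with the suspension I would pass through the equivalence $\umon(\omega,\G)\simeq\underline{\MF}(\omega,\G)$ of Theorem \ref{22}: the matrix factorization attached to $T(M)$ by Theorem \ref{sg} is ${\xymatrix{(P_M \ar@<0.6ex>[r]^{f_M}& G_M\ar@<0.6ex>[l]^{f_M^{-}})}}$, with $f_M^{-}$ as in Lemma \ref{lem1}, its suspension is obtained by interchanging the two components, and the cokernel $\cok(f_M^{-})$ of the swapped factorization is $\syz^{-1}_R M$ by the periodicity inherent in matrix factorizations over $R$. Using that $P_M$ is projective I would produce a morphism from this swapped object to $T(\syz^{-1}_R M)$ inducing the identity on $\syz^{-1}_R M$ and argue it is a stable isomorphism, yielding a natural isomorphism $\Sigma\,\underline{T}(M)\cong\underline{T}(\syz^{-1}_R M)$. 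For triangle preservation I would lift a conflation $0\rt M'\st{a}\rt M\st{b}\rt M''\rt 0$ in $\G(R)$ to one in $\mon(\omega,\G)$: resolving $M$ as above and putting $G':=\Ker(b\varphi_M)$, the bound $\mathsf{Gpd}_S M''\leq 1$ together with the closure of $\G(S)$ under kernels of epimorphisms forces $G'\in\G(S)$, while the vanishing $\Ext^{\geq1}_S(\G(S),\cp(S))=0$ supplies the lifts needed to assemble the three terms and to match $\underline{T}(a)$, $\underline{T}(b)$ and the connecting map through $\cok$.

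I expect this last verification to be the main obstacle. Because $\mon(\omega,\G)$ is \emph{not} extension-closed (Example \ref{nonex}) and the cokernel functor does \emph{not} descend to $\umon(\omega,\G)$, I cannot simply transport short exact sequences; I must check by hand that the sequences assembled from the kernel construction are honest conflations in $\mon(\omega,\G)$ (terms in $\G(S)$, cokernels over $R$), that their middle and outer objects are stably the prescribed values of $T$, and that the induced maps agree with $\underline{T}(a)$, $\underline{T}(b)$ and with the connecting morphism under the suspension isomorphism. Controlling these identifications — where the recurring inputs are the projectivity of the kernels $P_M$ and the $\Ext$-vanishing of Gorenstein projectives against projectives — is where the real work lies.
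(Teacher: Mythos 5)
Up to its final step, your construction is the one in the paper: $T$ is defined on objects and morphisms exactly via Lemma \ref{precover}, well-definedness is proved by factoring the difference of two lifts through $(P\st{\id}\rt P)$, descent to the stable categories uses $T(R)=(S\st{\omega}\rt S)$, and your faithfulness/fullness arguments via the cokernel functor are correct (and fill in what the paper dismisses as ``evident''). Your conflation-lifting step is also essentially the paper's: your kernel $G'=\Ker(b\varphi_M)$ is isomorphic to the paper's kernel, and the $\Ext$-vanishing you invoke is exactly what completes the assembly --- the pullback $G_M\times_{M''}G''$ splits as $G_M\oplus P''$ because $\Ext^1_S(G_M,P'')=0$, giving the conflation $0\rt (P\rt G')\rt (P''\st{\id}\rt P'')\oplus(P\rt G_M)\rt (P''\rt G'')\rt 0$; note the middle term must be inflated by the projective--injective summand $(P''\st{\id}\rt P'')$, since your lifted map $G_M\rt G''$ alone need not be surjective.

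The genuine gap is your treatment of the suspension. In $\underline{\MF}(\omega, \G)$ --- unlike in $\underline{\MF}(\omega, \cp)$ --- the suspension is \emph{not} given by interchanging the two components. Swapping is an involution, so if it computed $\Sigma$ then $\Sigma^2\cong\id$ on all of $\umon(\omega,\G)$; concretely, for a non-projective $G\in\G(S)$ the swap of $(G\st{\omega}\rt G)$ is $(G\st{\id}\rt G)$, which has zero cokernel but is \emph{not} projective--injective (by Theorem \ref{frob} the projective--injective objects have projective components), whereas any suspension $\Sigma(G\st{\omega}\rt G)$, computed from a conflation into a projective--injective object as in Lemma \ref{lem22}, has cokernel a cosyzygy of $G/\omega G$ over $R$, which is nonzero stably since $G/\omega G$ is not $R$-projective. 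So swap $\not\cong\Sigma$ here. Relatedly, the ``periodicity'' you invoke is unavailable: in this theorem $R=S/(\omega)$ is a completely arbitrary local ring (take $S=R[[t]]$, $\omega=t$), not a hypersurface, and $\cok(f_M^{-})$ is the kernel of the Gorenstein projective precover $G_M/\omega G_M\rt M$, not an honest (co)syzygy, so the identification $\cok(f_M^{-})\cong\syz^{-1}_R M$ has no justification and fails in general. Fortunately, none of this machinery is needed: suspension compatibility follows from your own conflation-lifting step applied to a conflation $M\rt Q\rt \syz^{-1}_R M$ with $Q$ projective over $R$. Since $T$ takes projective $R$-modules to projective--injective objects of $\mon(\omega,\G)$, the lifted conflation has projective--injective middle term, and therefore exhibits a natural isomorphism $\underline{T}(\syz^{-1}_R M)\cong\Sigma\,\underline{T}(M)$. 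This is the route the paper's proof (implicitly) takes, and you should replace your swap-and-periodicity argument by it.
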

\begin{proof}First we define a functor $T:\G(R)\lrt\umon(\omega, \G)$, as follows.
Assume that $M$ is an arbitrary object of $\G(R)$. In view of Lemma \ref{precover}, there exists a short exact sequence of $S$-modules $0\rt P\st{g}\rt G\st{\varphi}\rt M\rt 0$, where $P\in\cp(S)$ and $\varphi:G\rt M$ is a Gorenstein projective precover of $M$. So $(P\st{g}\rt G)\in\mon(\omega, \G)$ and we define $T(M):=(P\st{g}\rt G)$.  Next, for a given morphism $f:M\rt M'$ in $\G(R)$, one may obtain the following commutative diagram of $S$-modules with exact rows:\[\xymatrix{0 \ar[r] & P \ar[r]^{g} \ar[d]_{\varphi_1} & G \ar[r] \ar[d]_{\varphi_0} & M \ar[r] \ar[d]_{f} & 0\\  0 \ar[r] & P' \ar[r]^{g'} & G' \ar[r] & M' \ar[r] & 0,&}\] in which $P, P'\in\cp(S)$. It should be noted that the existence of $\varphi_0$ comes from the facts that $G'\rt M'$ is a Gorenstein projective precover and $G\in\G(S)$. In particular, we get the morphism $\varphi:=(\varphi_1, \varphi_0): (P\st{g}\rt G)\lrt (P'\st{g'}\rt G')$ in $\umon(\omega, \G)$, and we set $T(f):=(\varphi_1, \varphi_0)$.  The first issue to address is the well-defindness of $T$. It is obvious that for given two morphisms $f,f':M\rt M'$ in $\G(R)$, $T(f+f')=T(f)+T(f')$. Now suppose that $f: M\rt M'$ is a zero morphism. Keeping the notation of the latter diagram, one may find a morphism $t: G\rt P'$ such that $g't=\varphi_0$. Moreover, as $g'$ is a monomorphism, one may deduce that $tg=\varphi_1$. In particular, we obtain the following commutative diagram of $S$-modules:
{\tiny{\[\xymatrix@C-0.6pc@R-1.4pc{ && P \ar[rr]^{g} \ar[ddd]_{\varphi_1}
\ar[ddr]^{\varphi_1}  && G  \ar[ddd]^{\varphi_0} \ar[ddr]^{t} \\ \\
& && P' \ar[rr]^{\id} \ar[dl]_{\id} &&  P'\ar[dl]^{g'}      \\
&& P' \ar[rr]^{g'} && G'
}\]}}Consequently, the morphism $\varphi=(\varphi_1,\varphi_0)$ factors through the projective object $(P'\st{\id}\rt P')$ in $\mon(\omega, \G)$, and so, $T(f)=(\varphi_1,\varphi_0)$ is zero in $\umon(\omega, \G)$. Thus $T$ is well-defined. Next assume that a morphism $f: M\rt M'$ in $\G(R)$ factors through a projective object $P$. As $R$ is local, $P=\oplus_{i=1}^{n}R$. Considering the short exact sequence of $S$-modules $0\rt S\st{\omega}\rt S\rt R\rt 0$, one may easily infer that $T(f)=(\varphi_1, \varphi_0)$ factors through the projective object $\oplus_{i=1}^{n}(S\st{\omega}\rt S)$ in $\mon(\omega, \G)$. So, there is an induced functor $\underline{T}:\underline{\G}(R)\lrt\umon(\omega, \G)$. Evidently, the functor $\underline{T}$ is full. Now we show that $\underline{T}$ is faithful. To see this, assume that $f:M\rt M'$ is a morphism in $\G(R)$ such that $\underline{T}(f)=(\varphi_1, \varphi_0)=0$ in $\umon(\omega, \G)$. It must be proved that $f=0$ in $\underline{\G}(R)$. By our hypothesis, $(\varphi_1, \varphi_0)$ factors through a projective object of $\mon(\omega, \G)$. In view of Theorem \ref{frob}, we may assume that $(\varphi_1, \varphi_0)$ factors through of finite direct sum $\oplus_{i=1}^n(S\oplus S\st{\omega\oplus\id}\lrt S\oplus S)$. Therefore, one has the following commutative diagram of $S$-modules:
$$\xymatrix{0\ar[r]& P\ar[r] \ar[d]_{h_1} & G' \ar[d]_{h_0}\ar[r]& M \ar[d]_{l}\ar[r]& 0\\ 0\ar[r]& \oplus_{i=1}^n(S\oplus S) \ar[r]^{\oplus(\omega\oplus \id)}\ar[d]_{h'_1} & \oplus_{i=1}^n(S\oplus S) \ar[d]_{h'_0}\ar[r] & \oplus_{i=1}^nR\ar[d]_{l'}\ar[r]& 0 \\ 0\ar[r] & P' \ar[r] & G'\ar[r]& M'\ar[r]& 0,}$$where $h'_1h_1=\varphi_1$, $h'_0h_0=\varphi_0$ and { $l, l'$ are induced morphisms. In particular, the commutativity of the diagram yields that $f=l'l$. Thus $f$} factors through a projective object, and so, $f=0$ in $\underline{\G}(R)$, as desired. Hence, it remains to show that $\underline{T}$ is a triangle functor. {To do this, we show that the image of any short exact sequence in $\G(R)$ comes from a short exact sequence in $\mon(\omega, \G)$. Namely, assume that $0\rt M'\st{f}\rt M\st{g}\rt M''\rt 0$  is a short exact sequence in $\G(R)$. We prove that there exist a short exact sequence $0\lrt (G'_1\st{g'}\rt G'_0)\lrt (G_1\st{g}\rt G_0)\lrt (G''_1\st{g''}\rt G''_0)\lrt 0$ in $\mon(\omega, \G)$, such that the sequence $T(M')\st{T(f)}\lrt T(M)\st{T(g)}\lrt T(M'')$ is nothing more than the sequence $ (G'_1\st{g'}\rt G'_0)\lrt (G_1\st{g}\rt G_0)\lrt (G''_1\st{g''}\rt G''_0)$ in $\umon(\omega, \G)$.
 To do this,  Consider short exact sequences of $S$-modules, $0\rt P\st{l}\rt G\st{\pi}\rt M\rt 0$ and $0\rt P''\st{l''}\rt G''\st{\pi''}\rt M''\rt 0$, which are obtained by using Lemma \ref{precover}. Since $G''\st{\pi''}\rt M''$ is a Gorenstein projective precover and $G\in\G(S)$, there exists an $S$-homomorphism $h: G\rt G''$ such that $\pi''h=g\pi$. So, one may get a commutative diagram of $S$-modules with exact rows: \[\xymatrix{0 \ar[r] & P''\oplus P \ar[r]^{\id\oplus l} \ar[d]_{[\id~~t]} & P''\oplus G \ar[r]^{[0~~\pi]} \ar[d]_{[l''~~h]} & M \ar[r] \ar[d]_{g} & 0\\  0 \ar[r] & P'' \ar[r]^{l''} & G'' \ar[r]^{\pi''} & M'' \ar[r] & 0,&}\]where vertical maps are surjective, and $t$ is an induced morphism. This, in particular, gives us the following commutative diagram of $S$-modules with exact rows and columns:
\[\xymatrix@C-0.5pc@R-.8pc{&0\ar[d]&0\ar[d]&0 \ar[d]  && \\ 0\ar[r]&
P'~\ar[r]\ar[d]& P''\oplus P\ar[r]\ar[d]& P''\ar[d]\ar[r] & 0 \\ 0 \ar[r]  & G'~\ar[r]\ar[d]_{\pi'} & P''\oplus G\ar[r]\ar[d]_{[0~~\pi]}& G''\ar[d]_{\pi''} \ar[r]& 0&\\
0 \ar[r]  & M'~\ar[r]^{f}\ar[d] & M\ar[r]^{g}\ar[d]& M''\ar[d] \ar[r]& 0&\\ &0&0& 0&&&\\ }\]where $P'$ and $G'$ are projective and Gorenstein projective, respectively. One should note that, since the map $\Ker [0~~\pi]\lrt\Ker\pi''$ is surjective, the snake lemma guarantees that the same is true for $\pi'$. Thus, according to the left column,  $G'\st{\pi'}\rt M'$ is a Gorenstein projective precover of $M'$ with a projective kernel. Consequently, the above diagram gives us the short exact sequence $0\lrt (P'\rt G')\lrt (P''\oplus P\rt P''\oplus G)\lrt (P''\rt G'')\lrt 0$ in $\mon(\omega, \G)$. In particular, by the definition of the functor $T$, the sequence $T(M')\st{T(f)}\lrt T(M)\st{T(g)}\lrt T(M'')$ in $\umon(\omega, \G)$, is indeed the sequence $ (P'\rt G')\lrt (P''\oplus P\rt P''\oplus G)\lrt (P''\rt G'')$.  Hence the proof is completed.}
\end{proof}

According to Theorem \ref{22}, $\umon(\omega, \G)$ is triangle equivalent to the stable category $\underline{\MF}(\omega, \G)$. This, combined with Theorem \ref{main}, yields the following result.
\begin{cor}There is a fully faithful triangle functor $\underline{\G}(R)\lrt\underline{\MF}(\omega, \G)$.
\end{cor}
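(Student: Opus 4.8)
The plan is to obtain the desired functor simply as the composite of the two fully faithful triangle functors that have already been constructed, so that no new homological input is required. First I would invoke Theorem \ref{main}, which furnishes a fully faithful triangle functor $\underline{T}:\underline{\G}(R)\lrt\umon(\omega, \G)$ sending each $M\in\G(R)$ to the object $(P\st{g}\rt G)$ arising from a short exact sequence $0\rt P\rt G\rt M\rt 0$ with $P\in\cp(S)$ and $G\rt M$ a Gorenstein projective precover. Then I would recall Theorem \ref{22}, which provides a triangle equivalence $\underline{F}:\umon(\omega, \G)\lrt\underline{\MF}(\omega, \G)$; in particular, being an equivalence, $\underline{F}$ is itself a fully faithful triangle functor.

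The key step is then to form the composite $\underline{F}\circ\underline{T}:\underline{\G}(R)\lrt\underline{\MF}(\omega, \G)$ and to check it inherits both required properties. Since a composition of triangle functors is again a triangle functor, $\underline{F}\circ\underline{T}$ is a triangle functor. For full faithfulness I would argue on morphism sets: $\underline{T}$ induces bijections $\Hom(M, M')\lrt\Hom(\underline{T}M,\underline{T}M')$ by Theorem \ref{main}, and $\underline{F}$ induces bijections on the corresponding Hom-sets by Theorem \ref{22}, whence their composite is again a bijection. Thus $\underline{F}\circ\underline{T}$ is the sought fully faithful triangle functor.

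Since everything reduces to composing previously established functors, I do not anticipate any genuine obstacle here; the statement is essentially a formal consequence of the two theorems cited. The only point meriting a word of care is to confirm that the natural isomorphisms witnessing compatibility of $\underline{T}$ and $\underline{F}$ with their respective shift (suspension) functors paste together to yield a compatibility isomorphism for the composite, but this is routine bookkeeping that follows from the definition of a composite of triangle functors. One could equivalently phrase the conclusion by saying that $\underline{\G}(R)$ is realized, via $\underline{F}\circ\underline{T}$, as a triangulated subcategory of $\underline{\MF}(\omega, \G)$.
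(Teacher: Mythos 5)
Your proposal is correct and coincides with the paper's own argument: the corollary is obtained precisely by composing the fully faithful triangle functor $\underline{T}:\underline{\G}(R)\lrt\umon(\omega, \G)$ of Theorem \ref{main} with the triangle equivalence $\umon(\omega, \G)\lrt\underline{\MF}(\omega, \G)$ of Theorem \ref{22}. Your additional remarks on composing the Hom-set bijections and the shift-compatibility isomorphisms are routine verifications that the paper leaves implicit.
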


It is worthwhile to compare the above result with \cite[Theorem 3.5]{bergh2015complete}, where it was shown that there is a fully faithful triangle functor from the homotopy category of matrix factorizations, which is the same as the stable category $\underline{\MF}(\omega, \cp)$, to the stable category $\underline{\G}(R)$.

{The singularity category $\ds(R)$ is defined to be the Verdier quotient of the bounded derived category $\db(R)$ of $R$ modulo the subcategory of perfect complexes. This notion is traced back to   Buchweitz \cite{buchweitz1987maximal} and is rediscovered in the geometric context by Orlov \cite{orlov2003triangulated}.}
By a fundamental result of Buchweitz and Happel \cite[Theorem 4.4.1]{buchweitz1987maximal} and \cite[Theorem 4.6]{happel1991gorenstein}, the stable category $\underline{\G}(R)$ is triangle equivalent to the singularity category $\ds(R)$ of $R$, provided that $R$ is Gorenstein. So, as an immediate consequence of Theorem \ref{main}, we include the result below.
\begin{cor}\label{dd}Let $R$ be a Gorenstein ring. Then there is a fully faithful triangle functor $F:\ds(R)\lrt\umon(\omega, \G)$.
\end{cor}

%{\bf{Acknowledgments.}}

\bibliographystyle{siam}
%\bibliography{newRefs(1)}

\end{document}